\documentclass[12pt,a4paper]{article}
\usepackage{amsfonts,amsthm,amsmath,amssymb,amscd}
\usepackage{fancybox}
\usepackage[latin1]{inputenc}
\usepackage[usenames]{color}
\usepackage{graphicx}
\usepackage{enumerate}
\usepackage{float}
\usepackage{fullpage}
\usepackage{epsfig,amssymb} 
\usepackage{subcaption}

\newtheorem{theorem}{Theorem}[section]
\newtheorem{corollary}[theorem]{Corollary}
\newtheorem{lemma}[theorem]{Lemma}
\newtheorem{proposition}[theorem]{Proposition}
\newtheorem{definition}[theorem]{Definition}

\newtheorem{rem}[theorem]{Remark}

\begin{document}
	
\title{\textbf{Renormalized Area of Hypersurfaces in Hyperbolic Spaces}}

\author{A. P\'ampano}
\date{\today}

\maketitle

\begin{abstract}
We employ Chen's conformal invariant quantity \cite[Theorem 1]{C} in combination with the Chern-Gauss-Bonnet formulas to obtain expressions for the renormalized area of asymptotically minimal hypersurfaces in the $(2n+1)$-dimensional hyperbolic space $\mathbb{H}^{2n+1}$, $n=1,2$. Our results extend Alexakis and Mazzeo's formula for the renormalized area for surfaces in $\mathbb{H}^3$ \cite[Proposition 3.1]{AM} as well as their relation between the renormalized area of minimal surfaces of $\mathbb{H}^3$ and the Willmore energy of their doubles in $\mathbb{R}^3$ \cite[Proposition 8.1]{AM} to the non-minimal case and to the higher dimensional case $n=2$.

Moreover, we also generalize our results by considering hypersurfaces in $(2n+1)$-dimensional Poincar\'e-Einstein spaces and even-dimensional submanifolds of arbitrary codimension.\\

\noindent{\emph{Keywords:} Bending Energy, Chern-Gauss-Bonnet Formula, Conformal Invariance, Renormalized Area.}\\
\noindent{\emph{MSC 2020:} 53C18, 53A07, 53C40.}
\end{abstract}

\section{Introduction}

Originated one century ago with the works of H. Weyl and E. Cartan, among others, the theory of conformal differential geometry is the branch of differential geometry which studies conformal invariant quantities, that is, geometric quantities that remain invariant under conformal transformations. Two such quantities that have received a lot of attention in the last decades are the (conformal) bending energy and the renormalized area of minimal hypersurfaces.

The classical bending energy of surfaces arises from the elastic theory pioneered by S. Germain and S. Poisson. Indeed, it is a particular case of the curvature-dependent integrals suggested by S. Germain at the beginning of the 19th century to study the physical system associated with Chladni plates. For a surface $M$ immersed in the Euclidean space $\mathbb{R}^3$, the (conformal) bending energy is presented as
$$\mathcal{W}(M)=\int_M\left(\overline{H}^2-\overline{K}\right)d\overline{A}\,,$$ 
where $\overline{H}$ and $\overline{K}$ represent the mean and Gaussian curvatures of the surface $M$, respectively. Later reintroduced by T. Willmore \cite{W}, the (conformal) bending energy $\mathcal{W}$ is usually referred to as the Willmore functional. The conformal invariance of the Willmore functional is an essential property shown by W. Blaschke and G. Thomsen \cite{BT} in the early 20th century and which has proven itself very useful to understand the shape of equilibria and minimizers. In 1974, B.-Y. Chen \cite{C} extended this conformal invariant quantity to submanifolds $M$ of arbitrary Riemannian manifolds $N$. More precisely, in Theorem 1 of \cite{C}, he showed that the quantity
$$\left(H^2-R\right)g\,,$$
is invariant under any conformal change of metric. In above expression, $H$ and $R$ denote, respectively, the mean and extrinsic scalar curvatures of $M$ and $g$ is the Riemannian metric of the ambient space $N$.

On the other hand, motivated by the physical theory of the AdS/CFT correspondence which had been recently proposed by J. Maldacena \cite{M} (see also \cite{Wi}) and, in particular, by the paper \cite{M1}, C. R. Graham and E. Witten \cite{GW} introduced in 1999 the renormalized area functional acting on a certain class of minimal submanifolds of Poincar\'e-Einstein spaces (these spaces were presented in \cite{FG}). Among other things, they showed that the renormalized area is well defined when the submanifold is even-dimensional. The specific case of minimal surfaces was further analyzed by S. Alexakis and R. Mazzeo \cite{AM}, who succeeded to obtain a explicit Gauss-Bonnet type formula for their renormalized area, namely,
$$\mathcal{A}_R(M)=-2\pi\chi(M)-\frac{1}{2}\int_M \lvert\mathring{B}\rvert^2\, dA\,.$$
Here, $\mathcal{A}_R(M)$ stands for the renormalized area of a minimal surface $M$ properly embedded in a hyperbolic $3$-space, $\mathring{B}$ is the trace-free second fundamental form of $M$, and $\chi(M)$ represents the Euler characteristic of $M$. In the case of a $4$-dimensional minimal hypersurface of a Poincar\'e-Einstein space, the analogue of the above formula has been recently presented in Theorem 1.1 of \cite{T}.

In the paper \cite{AM}, S. Alexakis and R. Mazzeo employed their Gauss-Bonnet type formula to deduce that the renormalized area of a \emph{minimal} surface in the hyperbolic space $\mathbb{H}^3$ coincides, up to a coefficient $-2$, with the Willmore functional of its double regarded as a surface in $\mathbb{R}^3$. This was the first time a connection was drawn between these two quantities. However, even though in the same paper \cite{AM} it was shown that the renormalized area may be considered for a larger family of surfaces (not necessarily minimal), it was noted that it was not clear if such a simple connection between the renormalized area of (admissible) non-minimal surfaces and the Willmore functional existed. In a recent paper \cite{PP}, B. Palmer and the author positively answered this open question, obtaining the relation (see also Theorem \ref{t2} below)
$$\mathcal{A}_R(M)=\int_M H^2\,dA-\int_M \overline{H}^2\,d\overline{A}\,,$$
where $H$ is the mean curvature of the surface $M$ in the hyperbolic space $\mathbb{H}^3$ and $dA$ is the area element on $M$ induced from the hyperbolic metric ($\overline{H}$ and $d\overline{A}$ denote the analogues, when $M$ is regarded after a conformal change of metric as a surface in $\mathbb{R}^3$). The above general relation has arisen in \cite{PP} while the authors analyzed a second order reduction of the Helfrich equation, which represents capillary surfaces in $\mathbb{H}^3$ under the action of constant gravity. The proof is based on the divergence theorem and despite the fact that one can mimic it for higher dimensional cases, it is not obvious at all how to infer information regarding the renormalized area.

Even though both quantities discussed above have been widely studied independently, explicit treatments directly relating them, or justifying their relationship, appear to be scarce. In this paper, we show that both are closely related through the classical Gauss-Bonnet theorem and its higher dimensional analogues, the Chern-Gauss-Bonnet formulas. More precisely, beginning with Chen's conformal invariant quantity \cite[Theorem 1]{C}, we obtain a formula for the area of a hypersurface $M$ of the $(2n+1)$-dimensional hyperbolic space $\mathbb{H}^{2n+1}$ in terms of the Euclidean (after conformally changing the metric) and hyperbolic bendings (Proposition \ref{t1}). When the hypersurface $M$ is asymptotically minimal (i.e., $H=\mathcal{O}(z^n)$), which in particular means that $M$ approaches the ideal boundary $\partial_\infty \mathbb{H}^{2n+1}$ at a right angle, this formula for the hyperbolic area can be used to understand the renormalized area of $M$. To do so, we employ the Chern-Gauss-Bonnet formulas in order to simplify some of the integral terms arising in the above-mentioned expression. We explicitly carry out all the computations for the lower dimensional cases of surfaces in $\mathbb{H}^3$ and hypersurfaces in $\mathbb{H}^5$, obtaining explicit expressions for their renormalized areas, regardless if they are minimal or not (see Theorems \ref{t2} and \ref{t3}, respectively). Of course, for surfaces in $\mathbb{H}^3$ we recover the relation of \cite{PP}, although our proof is essentially different. For the case of hypersurfaces in $\mathbb{H}^5$ our result shows that the renormalized area can be defined for asymptotically minimal ($H=\mathcal{O}(z^2)$) hypersurfaces, a strictly larger class than that of minimal ones.

Applying once again Theorem 1 of \cite{C} and the Chern-Gauss-Bonnet theorem to our general formulas for the renormalized area of hypersurfaces in $\mathbb{H}^{2n+1}$, $n=1,2$, we obtain Gauss-Bonnet type expressions for $\mathcal{A}_R$ which extend those for the minimal case presented in \cite{AM} and \cite{T}, respectively (see Corollaries \ref{c1} and \ref{c2} below). In particular, the Gauss-Bonnet type formula for the renormalized area of surfaces in $\mathbb{H}^3$ reads (c.f., Proposition 3.1 of \cite{AM})
$$\mathcal{A}_R(M)=-2\pi\chi(M)-\frac{1}{2}\int_M \lvert\mathring{B}\rvert^2\,dA+\int_M H^2\,dA\,.$$
The renormalized area in this expression is decomposed in three terms: a topological term which is a multiple of the Euler characteristic, a conformal invariant term consisting of the integral of Chen's quantity (c.f., Remark \ref{rem} below), and an extrinsic non-conformally invariant term given by the (hyperbolic) bending. Such a decomposition also holds for hypersurfaces in $\mathbb{H}^5$ (see Corollary \ref{c2}).

In the last two sections, we extend our Gauss-Bonnet type formulas for the renormalized area of hypersurfaces to the more general ambient spaces given by Poincar\'e-Einstein manifolds (Section 5) and to arbitrary codimension (Section 6). The approach we present is simpler than the one of Corollaries \ref{c1} and \ref{c2}, in the sense that it does not involve any formulas of $\mathcal{A}_R$ in terms of the difference of the bendings. Nonetheless, as in the hyperbolic case, both the Chern-Gauss-Bonnet formulas and Chen's conformal invariant quantity play a central role. Our computations can be extended to higher dimensional cases in a fairly straightforward manner.

\section{Basic Definitions and Notation}

Let $n$ be a fixed positive integer and denote by $N^{2n+1}(\rho)$ the $(2n+1)$-dimensional space form of constant sectional curvature $\rho\in\mathbb{R}$. If $\rho=0$, the space form $N^{2n+1}(\rho)$ is the Euclidean space $\mathbb{R}^{2n+1}$. In terms of the standard coordinates $(x_1,...,x_{2n},z)$, the metric of $\mathbb{R}^{2n+1}$ is given by
$$\overline{g}=\sum_{i=1}^{2n} dx_i^2+dz^2\,.$$
The $(2n+1)$-dimensional hyperbolic space $\mathbb{H}^{2n+1}$ is the space form $N^{2n+1}(\rho)$ of sectional curvature $\rho=-1$. Throughout this paper, we will consider the upper half-space model for $\mathbb{H}^{2n+1}$. That is, 
$$\mathbb{H}^{2n+1}=\{(x_1,...,x_{2n},z)\in\mathbb{R}^{2n+1}\,\lvert \, z>0\}\,,$$
endowed with the (hyperbolic) metric
$$g=\frac{1}{z^2}\left(\sum_{i=1}^{2n}dx_i^2+dz^2\right)=\frac{1}{z^2}\,\overline{g}.$$
The ideal boundary of the hyperbolic space $\mathbb{H}^{2n+1}$ is the set of points in $\mathbb{R}^{2n+1}$ with $z=0$, i.e.,
$$\partial_\infty\mathbb{H}^{2n+1}=\{(x_1,...,x_{2n},0)\in\mathbb{R}^{2n+1}\}\,.$$

Let $M$ be a smooth and compact (with or without boundary) hypersurface embedded in $N^{2n+1}(\rho)$ (hence, $M$ is a $2n$-dimensional manifold itself). Pick up a choice of unit normal vector field $\xi$ to $M$ and denote by $B$ the second fundamental form of $M$ in $N^{2n+1}(\rho)$. The principal directions of $M$ are denoted by $v_1,...,v_{2n}$, while $\kappa_1,...,\kappa_{2n}$ represent the associated principal curvatures. The mean curvature $H$ and the extrinsic scalar curvature $R$ of the hypersurface $M$ are defined in terms of the principal curvatures as
\begin{equation}\label{HandR}
	H=\frac{1}{2n}\sum_{i=1}^{2n}\kappa_i\,,\quad\quad\quad R=\frac{1}{2n(2n-1)}\sum_{i\neq j}\kappa_i\kappa_j\,,
\end{equation}
respectively. For future use, it is convenient to express $H$ and $R$ in terms of the second fundamental form $B$. Clearly, from \eqref{HandR}, $H={\rm Trace}B/(2n)$, while expanding $(\kappa_1+\cdots+\kappa_{2n})^2$ we can check that $R$ satisfies
\begin{equation}\label{RinB}
	R=\frac{1}{2n(2n-1)}\left(4n^2 H^2-\lvert B\rvert^2\right),
\end{equation}
where $\lvert B\rvert^2=\kappa_1^2+\cdots+\kappa_{2n}^2$ is the square of the second fundamental form $B$. Observe also that $\lvert B\rvert^2=2nC$, where $C$ is the Casorati curvature \cite{BY} and, hence, $R=(2nH^2-C)/(2n-1)$.

Employing the Gauss' equation, the extrinsic scalar curvature $R$ of $M$ can be related to the sectional curvature $\rho$ of the ambient space $N^{2n+1}(\rho)$ by
\begin{equation}\label{extrinsic-intrinsic}
	R=\lambda-\rho\,,
\end{equation}
where $\lambda$ is the (intrinsic) scalar curvature of $M$ defined as
\begin{equation}\label{lambda}
	\lambda=\frac{1}{2n(2n-1)}\sum_{i\neq j} K(e_i,e_j)=\frac{1}{2n(2n-1)}\sum_{i\neq j} {\rm Rm}(e_i,e_j,e_i,e_j)\,.
\end{equation}
Here, $K(e_i,e_j)$ denotes the sectional curvature of the planar section spanned by $e_i$ and $e_j$, where $\{e_1,...,e_{2n}\}$ is any orthonormal frame tangent to $M$, and ${\rm Rm}$ stands for the Riemann curvature of $M$. Observe that, in order to simplify upcoming expressions, we have chosen to include a coefficient depending on the dimension in our definition of scalar curvature, contrary to the standard case. 

The area and bending of the hypersurface $M$ in $N^{2n+1}(\rho)$ are, respectively,
\begin{equation}\label{area-bending}
	\mathcal{A}(M)=\int_M 1\,dA\,,\quad\quad\quad \mathcal{B}(M)=\int_M H^{2n}\,dA\,,
\end{equation}
where $dA$ denotes the volume element on $M$ induced from the metric of the ambient space form $N^{2n+1}(\rho)$. If $M$ is a surface in $\mathbb{R}^3$, the bending of $M$ defined above differs from the Willmore functional $\mathcal{W}$ by the total Gaussian curvature term.

We also recall here the Chern-Gauss-Bonnet formula for $2n$-dimensional manifolds $M$ with boundary, in the lower dimensional cases $n=1,2$. When $M$ is a surface (that is, $n=1$), the Chern-Gauss-Bonnet formula reduces to the classical Gauss-Bonnet theorem
\begin{equation}\label{GB}
	\int_MK\,dA=2\pi\chi(M)-\oint_{\partial M}\kappa_g\,ds\,,
\end{equation}
where $K$ is the Gaussian curvature of $M$, $\kappa_g$ is the geodesic curvature of $\partial M$, and $\chi(M)$ denotes the Euler characteristic of the surface (with boundary) $M$. If $n=2$, $M$ is a $4$-dimensional manifold with boundary, in which case the Chern-Gauss-Bonnet formula can be written as (see, for instance, \cite{CC,CS} and references therein)
\begin{equation}\label{CGB0}
	\int_M 2\sigma_2(P)\,dA=4\pi^2\chi(M)-\frac{1}{8}\int_M\lvert W\rvert^2\,dA-\oint_{\partial M}S\,ds\,,
\end{equation}
where $\sigma_2(P)$ is the second elementary symmetric polynomial of the eigenvalues of the Schouten tensor $P$, $W$ is the Weyl tensor of $M$, and the $S$-curvature is given by
\begin{equation}\label{Scurvature}
	S=6\lambda h-{\rm Ric}(\nu,\nu)h-\sum_{j,k=1}^{4}{\rm Rm}(e_i,e_j,e_i,e_k)L^{jk}+\frac{1}{3}h^3-h\lvert L\rvert^2+\frac{2}{3}{\rm Trace}\left(L^3\right).
\end{equation}
Here, $L$ represents the second fundamental form of $\partial M$ in $M$, $h$ is the trace of $L$, ${\rm Ric}$ is the Ricci tensor, and $\nu$ is the inward-pointing normal to $\partial M$ in $M$.

From the definition of the Schouten tensor $P$, the polynomial $\sigma_2$ (see the Introduction of \cite{CC} for details), and the relation between the square of the Ricci tensor and its trace-free tensor, we obtain that the integrand $2\sigma_2(P)$ on the left-hand side of \eqref{CGB0} can be expressed in terms of the scalar curvature $\lambda$ and the square of the trace-free Ricci tensor $\lvert E\rvert^2$ as
\begin{equation}\label{sigma2}
	2\sigma_2(P)=3\lambda^2-\frac{1}{4}\lvert E\rvert^2\,.
\end{equation}
Thus, the Chern-Gauss-Bonnet formula \eqref{CGB0} for $4$-dimensional manifolds $M$ with boundary can be described in the equivalent form
\begin{equation}\label{CGB}
	\int_M \lambda^2\,dA=\frac{4}{3}\pi^2\chi(M)-\frac{1}{24}\int_M\lvert W\rvert^2\,dA+\frac{1}{12}\int_M\lvert E\rvert^2\,dA-\frac{1}{3}\oint_{\partial M}S\,ds\,.
\end{equation}

From now on, we will consider that the hypersurface $M$ is embedded in $\mathbb{H}^{2n+1}$. After a conformal change of metric given by $\overline{g}=z^2 g$, $M$ can be regarded as a hypersurface in $\mathbb{R}^{2n+1}$. In this setting, we will denote with an upper bar the Euclidean objects associated to $M$, to distinguish them from the hyperbolic ones (for instance, $H$ will denote the mean curvature of $M$ as a hypersurface in $\mathbb{H}^{2n+1}$, while $\overline{H}$ will represent the Euclidean counterpart). There are well known relations between the hyperbolic and Euclidean objects. As an example, for every $i=1,...,2n$,
$$\kappa_i=z\,\overline{\kappa_i}+\overline{\xi}_z\,,$$
where $\kappa_i$ and $\overline{\kappa}_i$ are the principal curvatures of $M$ regarded as a hypersurface in $\mathbb{H}^{2n+1}$ and $\mathbb{R}^{2n+1}$, respectively; while $\overline{\xi}_z$ is the last component of the Euclidean unit normal vector field to $M$. It then follows from \eqref{HandR} that
\begin{equation}\label{HRhyp-euc}
	H=z\,\overline{H}+\overline{\xi}_z\,,\quad\quad\quad R=z^2\overline{R}+2z\,\overline{\xi}_z\overline{H}+\overline{\xi}_z^2\,.
\end{equation} 
We refer the reader to \cite{C} for more relations and details.

\section{Formula for the Area}

Let $M$ be a smooth and compact hypersurface embedded in $\mathbb{H}^{2n+1}$. Specializing Theorem 1 of \cite{C} to our case, we deduce that the quantity
$$\left(H^2-R\right)g\,,$$
is conformally invariant. Hence, conformally changing the metric according to $\overline{g}=z^2g$ and understanding $M$ as a hypersurface in $\mathbb{R}^{2n+1}$, we obtain the equality
\begin{equation}\label{equality}
	\left(H^2-\lambda-1\right)g=\left(H^2-R\right)g=\left(\overline{H}^2-\overline{R}\right)\overline{g}=\left(\overline{H}^2-\overline{\lambda}\right)\overline{g}\,,
\end{equation}
where we have used the relation \eqref{extrinsic-intrinsic} between the intrinsic and extrinsic scalar curvatures of $M$ (both as a hypersurface in $\mathbb{R}^{2n+1}$ and $\mathbb{H}^{2n+1}$). This equality directly follows from \eqref{HRhyp-euc} as well.

\begin{rem}\label{rem} Observe that Chen's conformal invariant quantity $(H^2-R)\,g$ can be expressed as a multiple of the square of the trace-free second fundamental form $\lvert \mathring{B}\rvert^2$ times the metric $g$. Indeed, from \eqref{RinB} and the standard relation $\lvert B\rvert^2=\lvert\mathring{B}\rvert^2+2n H^2$, we get
	\begin{equation}\label{ChenB}
		H^2-R=\frac{1}{2n(2n-1)}\,\lvert\mathring{B}\rvert^2\,.
	\end{equation}
\end{rem}

As a straightforward consequence of the equality \eqref{equality}, we will next deduce a formula for the area of $M$ as a hypersurface in $\mathbb{H}^{2n+1}$.

\begin{proposition}\label{t1}
	Let $M$ be a compact hypersurface in $\mathbb{H}^{2n+1}$ with mean curvature $H$, scalar curvature $\lambda$, and extrinsic scalar curvature $R$. The area of $M$ satisfies
	\begin{eqnarray}\label{formulaarea}
		\mathcal{A}(M)&=&\sum_{i=0}^{n-1}(-1)^{n+i} \begin{pmatrix} n \\ i \end{pmatrix} \left( \int_M\overline{H}^{2(n-i)}\overline{R}^i\,d\overline{A}-\int_M H^{2(n-i)}R^i \,dA\right)\nonumber\\
		&&+\int_M \overline{\lambda}^n\, d\overline{A}-\sum_{i=1}^n\begin{pmatrix} n \\ i \end{pmatrix} \int_M\lambda^i\, dA\,,
	\end{eqnarray}
	where $\overline{H}$, $\overline{\lambda}$, and $\overline{R}$ are the mean, the scalar, and the extrinsic scalar curvatures of $M$ regarded as a hypersurface in $\mathbb{R}^{2n+1}$ and $d\overline{A}$ is the volume element on $M$ induced from the Euclidean metric.
\end{proposition}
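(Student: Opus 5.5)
The plan is to raise the pointwise identity \eqref{equality} to the $n$-th power and integrate over $M$. Since $M$ is $2n$-dimensional and $\overline{g}=z^2g$, the volume elements satisfy $d\overline{A}=z^{2n}\,dA$. A pointwise conformal factor relation of the form $f\,g=\overline{f}\,\overline{g}$ means $f=z^2\overline{f}$. Applying this to \eqref{equality} gives
$$H^2-R=z^2\left(\overline{H}^2-\overline{R}\right),\qquad\text{hence}\qquad \left(H^2-R\right)^n dA=\left(\overline{H}^2-\overline{R}\right)^n d\overline{A}.$$
This $n$-th power is exactly the conformally invariant density; the case $n=1$ is the classical Willmore-type invariance. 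I will integrate this identity over $M$. It may be more convenient to use the intrinsic form: $(H^2-\lambda-1)^n\,dA=(\overline{H}^2-\overline{\lambda})^n\,d\overline{A}$.

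Next I expand both sides with the binomial theorem, treating the two geometries asymmetrically so that the area term and the $\lambda$-terms come out as in \eqref{formulaarea}.

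On the Euclidean side I write $(\overline{H}^2-\overline{R})^n=\sum_{i=0}^{n}\binom{n}{i}(-1)^i\overline{H}^{2(n-i)}\overline{R}^i$. The top term $i=n$ is $(-1)^n\overline{R}^n=(-1)^n\overline{\lambda}^n$, since $\rho=0$ in \eqref{extrinsic-intrinsic}.

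On the hyperbolic side I also expand $(H^2-R)^n$, but I separate out the top term $(-1)^nR^n=(-1)^n(\lambda+1)^n$. I expand this further as $(-1)^n\sum_{i=0}^n\binom{n}{i}\lambda^i$. Its $i=0$ piece is the constant $(-1)^n$, which integrates to $(-1)^n\mathcal{A}(M)$. The remaining pieces give $(-1)^n\sum_{i=1}^n\binom{n}{i}\int_M\lambda^i\,dA$.

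Equating the integrals, I solve for $\mathcal{A}(M)$ and multiply through by $(-1)^n$. The mixed terms $i=0,\dots,n-1$ then acquire the sign $(-1)^{n+i}$ and appear as differences of Euclidean minus hyperbolic integrals. The Euclidean top term becomes $+\int_M\overline{\lambda}^n\,d\overline{A}$. The hyperbolic $\lambda$-sum moves to the right-hand side with a minus sign. This reproduces \eqref{formulaarea} exactly.

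No analytic obstacle arises, because $M$ is compact, so every integral is finite; the compactness hypothesis is needed precisely for this. The only delicate step is the sign bookkeeping after multiplying by $(-1)^n$: the hyperbolic top term must be expanded in $\lambda$ rather than $R$, so that the area appears alone. I will check the result against $n=1$, where it reads
$$\mathcal{A}=\int_M\overline{\lambda}\,d\overline{A}-\int_M\lambda\,dA-\left(\int_M\overline{H}^2\,d\overline{A}-\int_M H^2\,dA\right),$$
consistent with integrating $H^2-\lambda-1=z^2(\overline{H}^2-\overline{\lambda})$.
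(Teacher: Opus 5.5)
Your proposal is correct and follows the paper's proof essentially step for step: you integrate the $n$-th power of Chen's invariant, expand binomially, use $\overline{R}=\overline{\lambda}$ on the Euclidean side and $R^n=(\lambda+1)^n$ on the hyperbolic side, isolate $\mathcal{A}(M)$, and multiply by $(-1)^n$. You also make explicit the step the paper states without comment: from $H^2-R=z^2(\overline{H}^2-\overline{R})$ and $d\overline{A}=z^{2n}\,dA$ one gets $(H^2-R)^n\,dA=(\overline{H}^2-\overline{R})^n\,d\overline{A}$.
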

\textit{Proof.} Let $M$ be a compact hypersurface in $\mathbb{H}^{2n+1}$ and denote by $H$ and $R$ the mean and the extrinsic scalar curvatures, respectively. Consider the conformal change of metric $\overline{g}=z^2 g$ and regard $M$ as a hypersurface in $\mathbb{R}^{2n+1}$ with Euclidean mean curvature $\overline{H}$ and extrinsic scalar curvature $\overline{R}$ (observe that from \eqref{extrinsic-intrinsic} it follows that $\overline{R}=\overline{\lambda}$ where $\overline{\lambda}$ represents the intrinsic scalar curvature).

From the equality \eqref{equality} we deduce that
\begin{equation}\label{square}
	\int_M \left(H^2-R\right)^n\, dA=\int_M\left(\overline{H}^2-\overline{R}\right)^n\, d\overline{A}\,,
\end{equation}
where $dA$ and $d\overline{A}$ are the volume elements on $M$ induced from the hyperbolic and Euclidean metrics, respectively. The integrands on above expression are polynomials. Expanding them, we have
\begin{equation*}
	\left(H^2-R\right)^n= \sum_{i=0}^n (-1)^i \begin{pmatrix} n \\ i \end{pmatrix} H^{2(n-i)} R^i= (-1)^n R^n +\sum_{i=0}^{n-1} (-1)^i \begin{pmatrix} n \\ i \end{pmatrix} H^{2(n-i)} R^i\,.
\end{equation*}
This expansion is exactly the same in the Euclidean setting (with the upper bars), hence we avoid writing it here. In the Euclidean case, we use that $\overline{R}=\overline{\lambda}$ in the first term of the right-hand side. On the other hand, in the hyperbolic setting we employ \eqref{extrinsic-intrinsic} and expand
$$R^n=(\lambda+1)^n=\sum_{i=0}^n \begin{pmatrix} n \\ i \end{pmatrix} \lambda^i=1+\sum_{i=1}^n\begin{pmatrix} n\\i\end{pmatrix}\lambda^i\,.$$
Combining this with the above expansion (for both the Euclidean and hyperbolic settings) and substituting everything in \eqref{square}, we obtain after rearranging
\begin{eqnarray*}
	(-1)^n \mathcal{A}(M)&=&-(-1)^n\sum_{i=1}^n \begin{pmatrix} n \\ i \end{pmatrix} \int_M \lambda^i \,dA- \sum_{i=0}^{n-1}(-1)^i \begin{pmatrix} n \\ i \end{pmatrix}\int_MH^{2(n-i)}R^i \,dA\\
	&&+(-1)^n\int_M\overline{\lambda}^n \,d\overline{A}+\sum_{i=0}^{n-1}(-1)^i \begin{pmatrix} n \\ i \end{pmatrix} \int_M \overline{H}^{2(n-i)}\overline{R}^i \,d\overline{A}\,.
\end{eqnarray*}
Finally, multiplying everything by $(-1)^n$, the relation \eqref{formulaarea} follows. \hfill$\square$

\section{Renormalized Area}

Let $M$ be a compact hypersurface with boundary $\partial M$ and assume that its interior is properly embedded in the upper half-space $\{z>0\}$ of $\mathbb{R}^{2n+1}$. Assume also that the boundary $\partial M$ lies in the hypersurface $\{z=0\}$ meeting this hypersurface at a right angle. Such a hypersurface $M$ can be regarded as a hypersurface in the hyperbolic space $\mathbb{H}^{2n+1}$ which approaches the ideal boundary $\partial_\infty \mathbb{H}^{2n+1}$ at a right angle. Clearly, in such a setting the area of the hypersurface $M$ becomes infinite. However, since $M$ is even-dimensional its area possesses a well defined Hadamard regularization, known as the renormalized area \cite{GW}.

The renormalized area $\mathcal{A}_R$ of $M$ is the constant term in the expansion of the area of the truncated hypersurfaces $M_\epsilon=M\cap\{z\geq \epsilon\}$, for $\epsilon>0$ sufficiently small. For each $\epsilon>0$ fixed, $M_\epsilon$ is a compact hypersurface fully embedded in $\mathbb{H}^{2n+1}$ and, hence, we can employ \eqref{formulaarea} to understand the renormalized area $\mathcal{A}_R$ of $M$. For this purpose, we need to understand the constant terms of the expansions in terms of $\epsilon>0$ of the integrals arising on the right-hand side of \eqref{formulaarea}. More precisely, we have
\begin{eqnarray*}
	\mathcal{A}_R(M)&=&\sum_{i=0}^{n-1}(-1)^{n+i}\begin{pmatrix} n \\ i \end{pmatrix}\left(\int_M \overline{H}^{2(n-i)}\overline{R}^i\,d\overline{A}-\int_M H^{2(n-i)}R^i\,dA\right)+\int_M \overline{\lambda}^n\,d\overline{A}\\
	&&-\sum_{i=1}^{n}\begin{pmatrix} n \\ i \end{pmatrix} {\rm f.p.}\int_M \lambda^i\,dA\,,
\end{eqnarray*}
where ${\rm f.p.}$ refers to the finite part (in other words, the constant term in the expansion in terms of $\epsilon>0$). All the integrals in the first line of the above expression are convergent, since they are Euclidean integrals or because \eqref{HRhyp-euc} in combination with $dA=z^{-2n}d\overline{A}$ and the fact that $M$ approaches the ideal boundary at a right angle. However, those integrals with ${\rm f.p.}$ in front may not be convergent and, hence, it would be desirable to rewrite them in a more suitable way. To this end, we will further restrict the asymptotic behavior of our hypersurfaces.

\begin{definition} We say that the $2n$-dimensional hypersurface $M$ is asymptotically minimal (of order $n$) if $H=\mathcal{O}(z^{n})$.
\end{definition} 

Observe that if $n=1$, from \eqref{HRhyp-euc} we deduce that asymptotically minimal is equivalent to $\overline{\xi}_z=\mathcal{O}(z)$ and so to $M$ meeting the ideal boundary $\partial_\infty\mathbb{H}^{3}$ at a right angle. For $n>1$, the asymptotically minimal hypothesis implies that $M$ approaches $\partial_\infty\mathbb{H}^{2n+1}$ at a right angle, but it is not equivalent to it. Throughout this paper we will assume that $M$ satisfies the above condition.

In the following sections we analyze in detail the cases of surfaces in $\mathbb{H}^3$ and hypersurfaces in $\mathbb{H}^5$ (that is, the cases $n=1,2$).

\subsection{Surfaces in $\mathbb{H}^3$}

We first consider the case in which $M$ is a surface in $\mathbb{H}^3$ (that is, $n=1$ holds). In this specific case, there is a well known formula for the renormalized area \cite[Proposition 3.1]{AM}. In the same paper, this formula was used to relate the renormalized area of a minimal surface $M$ to the Willmore energy of its double (see Proposition 8.1 of \cite{AM}). However, as pointed out just after that result, it was not known if a similar relation holds for non-minimal surfaces $M$.

Here, we employ the area formula \eqref{formulaarea} to extend Alexakis-Mazzeo's relation to the non-minimal case.

\begin{theorem}\label{t2}
	Let $M$ be a surface in $\mathbb{H}^3$ approaching the ideal boundary $\partial_\infty\mathbb{H}^3$ at a right angle. Then, the renormalized area of $M$ is given by the difference between the hyperbolic and Euclidean bendings of $M$. That is,
	\begin{equation}\label{AR3}
		\mathcal{A}_R(M)=\mathcal{B}(M)-\overline{\mathcal{B}}(M)=\int_M H^2 \,dA-\int_M \overline{H}^2\, d\overline{A}\,,
	\end{equation}
	where $H$ is the mean curvature of $M$ as a surface in $\mathbb{H}^3$, $dA$ is the area element on $M$ induced from the hyperbolic metric, while $\overline{H}$ and $d\overline{A}$ are the Euclidean analogues. Moreover, both integrals of \eqref{AR3} are convergent.
\end{theorem}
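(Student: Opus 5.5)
For $n=1$ I would specialize Proposition \ref{t1} to the truncated surfaces $M_\epsilon=M\cap\{z\geq\epsilon\}$ and then take constant terms in $\epsilon$. With $n=1$, formula \eqref{formulaarea} becomes
$$\mathcal{A}(M_\epsilon)=-\int_{M_\epsilon}\overline{H}^2\,d\overline{A}+\int_{M_\epsilon}H^2\,dA+\int_{M_\epsilon}\overline{\lambda}\,d\overline{A}-\int_{M_\epsilon}\lambda\,dA\,.$$
Here $\lambda=K$ and $\overline{\lambda}=\overline{K}$ are the Gaussian curvatures of the hyperbolic and Euclidean induced metrics. I would then evaluate each curvature term by the Gauss-Bonnet theorem \eqref{GB} on $M_\epsilon$, which is a compact surface with boundary $\partial M_\epsilon=M\cap\{z=\epsilon\}$ and has $\chi(M_\epsilon)=\chi(M)$ for small $\epsilon$. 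This gives
$$\int_{M_\epsilon}\overline{K}\,d\overline{A}-\int_{M_\epsilon}K\,dA=\oint_{\partial M_\epsilon}\left(\kappa_g\,ds-\overline{\kappa}_g\,d\overline{s}\right).$$
So the Euler characteristics cancel, and everything reduces to a boundary comparison of geodesic curvatures under the conformal change $g=z^{-2}\overline{g}$.

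For that comparison I would use the standard formula for the geodesic curvature under a conformal change $g=e^{2\varphi}\overline{g}$ with $\varphi=-\log z$, namely $\kappa_g=e^{-\varphi}\left(\overline{\kappa}_g+\partial_{\overline{\nu}}\varphi\right)$, with sign conventions matched to \eqref{GB}. Here $\overline{\nu}$ is the Euclidean unit conormal to $\partial M_\epsilon$ in $M$, and I take it outward-pointing, i.e. pointing toward smaller $z$; this is the convention under which \eqref{GB} carries the minus sign. Since $ds=z^{-1}d\overline{s}$, the hyperbolic term is
$$\kappa_g\,ds=\overline{\kappa}_g\,d\overline{s}+\partial_{\overline{\nu}}\varphi\,d\overline{s}=\overline{\kappa}_g\,d\overline{s}-\frac{\overline{\nu}_z}{z}\,d\overline{s}\,.$$
The boundary contribution is therefore $-\epsilon^{-1}\oint_{\partial M_\epsilon}\overline{\nu}_z\,d\overline{s}$. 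The right-angle hypothesis, equivalently $\overline{\xi}_z=\mathcal{O}(z)$, gives $\overline{\nu}_z=-1+\mathcal{O}(\epsilon^2)$ along $\partial M_\epsilon$. Hence the boundary term equals $\epsilon^{-1}L(\partial M_\epsilon)+\mathcal{O}(\epsilon)$, where $L$ is the Euclidean length. I would also expand $L(\partial M_\epsilon)=L(\partial M)+\mathcal{O}(\epsilon^2)$; the linear term vanishes because $M$ meets $\{z=0\}$ orthogonally. The upshot is that the curvature terms contribute $L(\partial M)/\epsilon+\mathcal{O}(\epsilon)$, whose finite part is zero.

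It remains to show that the bending integrals converge, so that their finite parts are just their values on $M$. The Euclidean one is clear, since $M$ is compact and smooth up to the boundary. For the hyperbolic one, \eqref{HRhyp-euc} gives $H=z\overline{H}+\overline{\xi}_z=\mathcal{O}(z)$ by the right-angle condition, and $dA=z^{-2}d\overline{A}$, so $H^2\,dA=\mathcal{O}(1)\,d\overline{A}$ is integrable. Taking the constant term on both sides then yields \eqref{AR3}.

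The main obstacle I expect is the precise boundary asymptotics. I need to get the sign convention of $\kappa_g$ relative to \eqref{GB} right, and to check that the right angle gives $\overline{\nu}_z=-1+\mathcal{O}(\epsilon^2)$ and not merely $+o(1)$. If only $\overline{\xi}_z=\mathcal{O}(z)$ is available, then $\overline{\nu}_z=-\sqrt{1-\overline{\xi}_z^2}=-1+\mathcal{O}(\epsilon^2)$, since $\overline{\nu}$ lies in the plane orthogonal to the level curve together with $\overline{\xi}$. This suffices to kill any constant term.
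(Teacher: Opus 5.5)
Your proposal is correct and follows the paper's proof: apply \eqref{formulaarea} with $n=1$ to $M_\epsilon$, cancel the Euler characteristics via Gauss--Bonnet, show the boundary terms have no finite part, and get convergence of $\mathcal{B}(M)$ from \eqref{HRhyp-euc} together with $dA=z^{-2}d\overline{A}$. The only difference is in the boundary terms. The paper treats $\oint_{\partial M_\epsilon}\kappa_g\,ds$ and $\oint_{\partial M_\epsilon}\overline{\kappa}_g\,d\overline{s}$ separately, citing (3.11) of \cite{AM} for the first and using that $\overline{\kappa}_g$ vanishes at $z=0$ (so its integral is $\mathcal{O}(\epsilon)$) for the second. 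You instead compute their difference exactly through the conformal transformation law for geodesic curvature, obtaining $\epsilon^{-1}L(\partial M)+\mathcal{O}(\epsilon)$, which makes the argument self-contained; you also rightly check that $L(\partial M_\epsilon)$ has no linear term in $\epsilon$, since such a term would otherwise produce a constant.
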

\textit{Proof.} Let $M$ be a surface in $\mathbb{H}^3$ satisfying the conditions of the statement and $\epsilon>0$ a sufficiently small fixed value. Consider the part of $M$ `far' from the ideal boundary $\partial_\infty\mathbb{H}^3$, i.e., the part $M_\epsilon=M\cap\{z\geq \epsilon\}$. Since $M_\epsilon$ does not approach $\partial_\infty\mathbb{H}^3$, its area is finite and we can use formula \eqref{formulaarea} to obtain
$$\mathcal{A}(M_\epsilon)=\int_{M_\epsilon}H^2\,dA-\int_{M_\epsilon}\overline{H}^2\,d\overline{A}+\int_{M_\epsilon}\overline{\lambda}\,d\overline{A}-\int_{M_\epsilon}\lambda\,dA\,,$$
since $n=1$ holds.

Observe that from our definition \eqref{lambda} of the intrinsic scalar curvature $\lambda$, this curvature for surfaces $M$ is just the sectional curvature $K\equiv K(e_1,e_2)$, in other words, the Gaussian curvature of $M$. We can then apply the classical Gauss-Bonnet theorem \eqref{GB} to deduce that
$$\int_{M_\epsilon}\overline{\lambda}\,d\overline{A}-\int_{M_\epsilon}\lambda\,dA=\left(2\pi\chi(M_\epsilon)-\oint_{\partial M_\epsilon} \overline{\kappa}_g\,d\overline{s}\right)-\left(2\pi\chi(M_\epsilon)-\oint_{\partial M_\epsilon}\kappa_g\,ds\right),$$
where $\chi(M_\epsilon)$ is the Euler characteristic of $M_\epsilon$ and $\kappa_g$ is the geodesic curvature of $\partial M_\epsilon$ ($\overline{\kappa}_g$ is the geodesic curvature with respect to the metric induced from the Euclidean one). Since the topology of $M_\epsilon$ does not change after modifying the metric, both terms involving the Euler characteristic cancel out. Hence, we end up with (after rearranging)
$$\mathcal{A}(M_\epsilon)-\oint_{\partial M_\epsilon}\kappa_g\,ds=\int_{M_\epsilon} H^2 \,dA-\int_{M_\epsilon} \overline{H}^2\,d\overline{A}-\oint_{\partial M_\epsilon}\overline{\kappa}_g\,d\overline{s}\,.$$
From (3.11) of \cite{AM}, it follows that the total geodesic curvature arising on the left-hand side of the above equality has no constant term when considering its expansion in terms of $\epsilon>0$. Hence, it does not contribute to the constant term of the expansion of $\mathcal{A}(M_\epsilon)$, i.e., to the renormalized area $\mathcal{A}_R$ of $M$. Moreover, as $\epsilon\to 0^+$, the (Euclidean) geodesic curvature $\overline{\kappa}_g$ vanishes due to the surface $M$ approaching $\partial_\infty\mathbb{H}^3$ at a right angle. Hence,
$$\oint_{\partial M_\epsilon}\overline{\kappa}_g\,d\overline{s}=\mathcal{O}(\epsilon)\,,$$
and so, it does not contribute to $\mathcal{A}_R$ either. In conclusion, \eqref{AR3} holds.

For the second part of the statement, we note that $\overline{\mathcal{B}}(M)$ is clearly convergent since $d\overline{A}$ is the area element induced from the Euclidean metric. The convergence of the hyperbolic bending $\mathcal{B}(M)$ is a consequence of \eqref{HRhyp-euc}. Indeed, since $M$ meets $\partial_\infty\mathbb{H}^3$ at a right angle, the last component of the Euclidean unit normal $\overline{\xi}_z$ is zero at $\partial M\subset\{z=0\}$. This implies that $\overline{\xi}_z=\mathcal{O}(z)$ and so $H^2=\mathcal{O}(z^2)$ as well. Combining this with $dA=z^{-2}d\overline{A}$ the finiteness of $\mathcal{B}(M)$ follows. \hfill$\square$

\begin{rem} Equation \eqref{AR3} has recently appeared as a particular case in Theorem 3.1 of \cite{PP}. In that work, the result is shown by applying the divergence theorem to the (surface) divergence of $\nabla\log z$. Our approach here is essentially different.
\end{rem}

The general relation \eqref{AR3} can be used, in combination with the Gauss-Bonnet formula \eqref{GB} and Chen's result \cite[Theorem 1]{C}, to obtain the general formula for the renormalized area of surfaces in $\mathbb{H}^3$ first given in Proposition 3.1 of \cite{AM}.

\begin{corollary}\label{c1} Let $M$ be a surface in $\mathbb{H}^3$ approaching the ideal boundary $\partial_\infty \mathbb{H}^3$ at a right angle. Then, the renormalized area of $M$ satisfies
	\begin{equation}\label{GBT3}
		\mathcal{A}_R(M)=-2\pi\chi(M)-\frac{1}{2}\int_M \lvert \mathring{B} \rvert^2\,dA+\mathcal{B}(M)\,,
	\end{equation}
where $\chi(M)$ is the Euler characteristic of $M$, $\lvert \mathring{B} \rvert^2$ is the square of the trace-free second fundamental form of $M$ in $\mathbb{H}^3$, and $\mathcal{B}(M)$ is the bending of $M$.
\end{corollary}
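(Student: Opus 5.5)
We start from \eqref{AR3} of Theorem \ref{t2} and rewrite the Euclidean bending $\overline{\mathcal{B}}(M)$ in terms of hyperbolic quantities. For $n=1$, Chen's invariance \eqref{equality} gives pointwise
$$\left(\overline{H}^2-\overline{K}\right)d\overline{A}=\left(H^2-K-1\right)dA=\left(H^2-R\right)dA,$$
since for surfaces $\overline{\lambda}=\overline{K}$ and $\lambda=K$. By Remark \ref{rem} with $n=1$, $H^2-R=\frac{1}{2}\lvert\mathring{B}\rvert^2$. Hence
$$\overline{\mathcal{B}}(M)=\int_M \overline{H}^2\,d\overline{A}=\frac{1}{2}\int_M\lvert\mathring{B}\rvert^2\,dA+\int_M\overline{K}\,d\overline{A}.$$
Both integrals on the right converge. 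Indeed, $\lvert\mathring{B}\rvert^2dA$ is conformally invariant, so it equals the Euclidean $\lvert\overline{\mathring{B}}\rvert^2d\overline{A}$, which is integrable on the compact surface. The integral of $\overline{K}\,d\overline{A}$ is Euclidean, hence finite.

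Next we evaluate $\int_M\overline{K}\,d\overline{A}$ with the Gauss--Bonnet theorem \eqref{GB}, applied to $M$ as a compact Euclidean surface with boundary $\partial M\subset\{z=0\}$:
$$\int_M\overline{K}\,d\overline{A}=2\pi\chi(M)-\oint_{\partial M}\overline{\kappa}_g\,d\overline{s}.$$
Since $M$ meets the plane $\{z=0\}$ at a right angle, the Euclidean conormal of $\partial M$ in $M$ is $\pm\partial_z$. The geodesic curvature of $\partial M$ in $M$ is the component of the curve's Euclidean acceleration along this conormal. The curve lies in $\{z=0\}$, so its acceleration has no $\partial_z$ component, and therefore $\overline{\kappa}_g\equiv 0$. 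This is the same fact used in the proof of Theorem \ref{t2} in the form $\oint_{\partial M_\epsilon}\overline{\kappa}_g\,d\overline{s}=\mathcal{O}(\epsilon)$. So $\int_M\overline{K}\,d\overline{A}=2\pi\chi(M)$. If one prefers to stay with the truncations $M_\epsilon$, one uses that $\chi(M_\epsilon)=\chi(M)$ for small $\epsilon$ and lets $\epsilon\to0^+$, appealing to the $\mathcal{O}(\epsilon)$ estimate.

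Substituting into \eqref{AR3} gives
$$\mathcal{A}_R(M)=\mathcal{B}(M)-\frac{1}{2}\int_M\lvert\mathring{B}\rvert^2\,dA-2\pi\chi(M),$$
which is \eqref{GBT3}.

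The only delicate point is justifying that the boundary term vanishes exactly, rather than just having zero constant term. This needs the right-angle hypothesis together with enough regularity of $M$ up to $\partial M$, so that Gauss--Bonnet applies to $M$ as a compact Euclidean surface with boundary. If needed, the limiting argument through $M_\epsilon$ handles this.
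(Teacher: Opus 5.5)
Your argument is correct and is essentially the paper's proof. You start from \eqref{AR3}, use Chen's invariance with \eqref{ChenB} to write the Euclidean bending as $\frac{1}{2}\int_M\lvert\mathring{B}\rvert^2\,dA+\int_M\overline{K}\,d\overline{A}$, and evaluate the last integral by Gauss--Bonnet, using that orthogonality makes $\partial M$ a Euclidean geodesic. The paper instead first writes $\mathcal{A}_R(M)=\int_M R\,dA-\int_M\overline{\lambda}\,d\overline{A}$ and substitutes $R=H^2-\frac{1}{2}\lvert\mathring{B}\rvert^2$ at the end, which is a cosmetic difference; you also justify $\overline{\kappa}_g\equiv 0$ on $\partial M$ more explicitly.
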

\textit{Proof.} Consider a surface $M$ which approaches $\partial_\infty \mathbb{H}^3$ at a right angle. Hence, we can apply Theorem \ref{t2} to obtain
$$\mathcal{A}_R(M)=\int_M H^2\,dA-\int_M \overline{H}^2\,d\overline{A}\,,$$
where $\overline{H}$ is the mean curvature of $M$ as a surface in $\mathbb{R}^3$ and $d\overline{A}$ is the Euclidean area element on $M$. 

Additionally, applying Theorem 1 of \cite{C} (see \eqref{equality} as well) to the second integral above, we have
\begin{equation}\label{use}
	\mathcal{A}_R(M)=\int_M H^2\,dA-\int_M \overline{H}^2\,d\overline{A}=-\int_M \overline{\lambda}\,d\overline{A}+\int_MR \,dA\,,
\end{equation}
where $R$ is the extrinsic scalar curvature of $M$ as a surface in $\mathbb{H}^3$ and $dA$ is the hyperbolic area element on $M$. Observe that the hyperbolic bending cancels out with the corresponding quantity of \eqref{equality}. For the first integral on the right-hand side of \eqref{use}, we use the (Euclidean) Gauss-Bonnet formula \eqref{GB} to get
\begin{equation}\label{use2}
	\int_M\overline{\lambda}\,d\overline{A}=\int_M \overline{K}\,d\overline{A}=2\pi\chi(M)-\oint_{\partial M}\overline{\kappa}_g\,d\overline{s}=2\pi\chi(M)\,,
\end{equation}
since $\overline{\lambda}=\overline{K}$ coincides with the Gaussian curvature and $\partial M$ is a geodesic of $M$ due to the orthogonality of the intersection between $M$ and $\partial_\infty\mathbb{H}^3=\{z=0\}$.

Finally, solving for $R$ in \eqref{ChenB} and substituting it and \eqref{use2} in \eqref{use}, we end up with
$$\mathcal{A}_R(M)=-2\pi\chi(M)-\frac{1}{2}\int_M \lvert \mathring{B}\rvert^2\,dA+\int_M H^2\,dA\,.$$
This finishes the proof. \hfill$\square$
\\

The Gauss-Bonnet type formula \eqref{GBT3} gives an expression of the renormalized area $\mathcal{A}_R$ of a surface $M$ decomposed into three terms: a topological invariant term involving the Euler characteristic, a conformal invariant term which is precisely the integral of Chen's quantity (see Remark \ref{rem}), and an extrinsic non-conformally invariant term which is the bending of the surface. Of course, if the surface $M$ is minimal, its bending is identically zero and, hence, \eqref{GBT3} reduces to Alexakis-Mazzeo's expression of Theorem 1.2 of \cite{AM}.

\subsection{Hypersurfaces in $\mathbb{H}^5$}

We next consider the case $n=2$, that is, $M$ is a hypersurface in $\mathbb{H}^5$. When $M$ is also assumed to be minimal in $\mathbb{H}^5$, a Gauss-Bonnet type formula for its renormalized area was presented in Theorem 1.1 of \cite{T}.

In the following result we will show a relation between the renormalized area $\mathcal{A}_R$ of an asymptotically minimal hypersurface $M$ in $\mathbb{H}^5$ (regardless of $M$ being minimal or not) and its Euclidean and hyperbolic bendings.

\begin{theorem}\label{t3} Let $M$ be an asymptotically minimal (of order $2$) hypersurface in $\mathbb{H}^5$. Then, the renormalized area of $M$ is given by
	\begin{eqnarray}\label{AR5}
		\mathcal{A}_R(M)&=&\int_M H^4\,dA-\int_M \overline{H}^4\,d\overline{A}-2\left(\int_M H^2 \lambda\,dA-\int_M\overline{H}^2\overline{\lambda}\,d\overline{A}\right)\nonumber\\
		&&+\frac{1}{12}\left(\int_M \lvert E\rvert^2\,dA-\int_M \lvert\overline{E}\rvert^2\,d\overline{A}\right)-\frac{1}{12}\int_M \left(2\lvert\mathring{B}\rvert^2+\Delta \lvert\mathring{B}\rvert^2\right)dA\,,
	\end{eqnarray}
where $H$ and $\lambda$ are the mean and the scalar curvatures, $\lvert E\rvert^2$ is the square of the trace-free Ricci tensor (the Euclidean analogues are denoted with an upper bar), and $\lvert\mathring{B}\rvert^2$ is the square of the trace-free second fundamental form of $M$ in $\mathbb{H}^5$. Moreover, all the integrals of \eqref{AR5} are convergent.
\end{theorem}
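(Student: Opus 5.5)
Following the proof of Theorem \ref{t2}, I would fix a small $\epsilon>0$ and apply Proposition \ref{t1} with $n=2$ to the compact truncation $M_\epsilon=M\cap\{z\geq\epsilon\}$. This gives
$$\mathcal{A}(M_\epsilon)=\int_{M_\epsilon}H^4\,dA-\int_{M_\epsilon}\overline{H}^4\,d\overline{A}-2\left(\int_{M_\epsilon}H^2R\,dA-\int_{M_\epsilon}\overline{H}^2\overline{R}\,d\overline{A}\right)+\int_{M_\epsilon}\overline{\lambda}^2\,d\overline{A}-2\int_{M_\epsilon}\lambda\,dA-\int_{M_\epsilon}\lambda^2\,dA\,.$$
I would then use $R=\lambda+1$ and $\overline{R}=\overline{\lambda}$. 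This turns the bracket into the $H^2\lambda$ difference appearing in \eqref{AR5}, plus an extra term $-2\int H^2\,dA$.

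Next I would apply the Chern-Gauss-Bonnet formula \eqref{CGB} to both $\int\lambda^2\,dA$ and $\int\overline{\lambda}^2\,d\overline{A}$. The Euler characteristic terms cancel because the topology is the same. The Weyl terms cancel because $\lvert W\rvert^2\,dA$ is pointwise conformally invariant in dimension four, and $dA=z^{-4}d\overline{A}$. What survives is the $\frac{1}{12}$ difference of the $\lvert E\rvert^2$ integrals, together with the boundary terms $\frac13\oint_{\partial M_\epsilon}(S\,ds-\overline{S}\,d\overline{s})$.

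The remaining hyperbolic, non-conformal pieces are $-2\int(H^2+\lambda)\,dA$ and the boundary terms. Here I would use $\lambda=R-1=H^2-1-\frac{1}{12}\lvert\mathring{B}\rvert^2$, which follows from \eqref{ChenB}. Then $-2\int(H^2+\lambda)\,dA$ becomes
$$2\mathcal{A}(M_\epsilon)-4\int H^2\,dA+\frac16\int\lvert\mathring{B}\rvert^2\,dA\,,$$
which feeds the area back into the identity. This requires redoing the bookkeeping: move the area to the left and solve.

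I expect $\int\lambda\,dA$ to need a separate treatment, since it is the divergent quantity. In four dimensions its finite part is governed by a Gauss-Bonnet-type identity for the scalar curvature. I would rewrite $\lambda$ through Gauss' equation and the conformal transformation law of scalar curvature under $g=z^{-2}\overline{g}$. This law expresses $\lambda\,dA$ as $\overline{\lambda}\,z^{-2}d\overline{A}$ plus terms involving $\overline{\Delta}z$ and $\lvert\overline{\nabla}z\rvert^2$. Using the relation $\overline{\xi}_z=H-z\overline{H}$ from \eqref{HRhyp-euc}, I would then integrate by parts. The Laplacian term $-\frac{1}{12}\int\Delta\lvert\mathring{B}\rvert^2\,dA$ in \eqref{AR5} strongly suggests that some boundary flux, most likely from $\oint S$ combined with $\partial_\nu$ of curvature quantities, is rewritten as an interior divergence of $\lvert\mathring{B}\rvert^2$.

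The main obstacle is the boundary analysis as $\epsilon\to0$. I need to show that the $S$-curvature integrals $\oint_{\partial M_\epsilon}S\,ds$ and $\oint\overline{S}\,d\overline{s}$, and the divergent part of $\int\lambda\,dA$, contribute to the constant term exactly $-\frac{1}{12}\int(2\lvert\mathring{B}\rvert^2+\Delta\lvert\mathring{B}\rvert^2)\,dA$ and nothing else. For this I would expand $M$ near $\{z=0\}$ as a graph over the boundary cylinder. The asymptotic minimality $H=\mathcal{O}(z^2)$ forces $\overline{\xi}_z=\mathcal{O}(z)$ and kills the odd-order coefficients that would otherwise create $\log\epsilon$ or constant terms. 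I would then compute $L$, $h$ and ${\rm Ric}(\nu,\nu)$ of the level sets $\{z=\epsilon\}$ in both metrics. Using $\lvert\mathring{B}\rvert^2=\mathcal{O}(z^2)$ in the hyperbolic metric, I would check that the Euclidean boundary term is $\mathcal{O}(\epsilon)$. Convergence of all the integrals would follow as in Theorem \ref{t2}: $H^4=\mathcal{O}(z^8)$ and $\lvert\mathring{B}\rvert^2=z^2\lvert\mathring{\overline{B}}\rvert^2$ against $dA=z^{-4}d\overline{A}$, with similar weight counts for $H^2\lambda$ and $\lvert E\rvert^2$.
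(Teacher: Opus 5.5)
The overall skeleton you describe matches the paper's: Proposition \ref{t1} on $M_\epsilon$, then $R=\lambda+1$, then Chern--Gauss--Bonnet with the $\chi$ and Weyl terms cancelling, then feeding the area back in via \eqref{ChenB}. \textbf{First problem:} your first display has the first group with reversed signs. Proposition \ref{t1} with $n=2$ gives
\begin{eqnarray*}
\mathcal{A}(M_\epsilon)&=&\int_{M_\epsilon}\overline{H}^4\,d\overline{A}-\int_{M_\epsilon}H^4\,dA+2\left(\int_{M_\epsilon}H^2R\,dA-\int_{M_\epsilon}\overline{H}^2\overline{R}\,d\overline{A}\right)\\
&&+\int_{M_\epsilon}\overline{\lambda}^2\,d\overline{A}-2\int_{M_\epsilon}\lambda\,dA-\int_{M_\epsilon}\lambda^2\,dA\,.
\end{eqnarray*}
The signs of \eqref{AR5} only appear after you solve $\mathcal{A}=X+2\mathcal{A}$, i.e.\ $\mathcal{A}=-X$. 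Your version is simply false. On the totally geodesic hemisphere $\{\lvert x\rvert^2+z^2=1\}$ it reduces to $\mathcal{A}(M_\epsilon)=\mathcal{A}(M_\epsilon)+2\int_{M_\epsilon}d\overline{A}$. Carried through, it gives the $H^4$ and $H^2\lambda$ differences with the wrong sign plus a spurious $4\int H^2\,dA$. With the correct signs, the $+2\int H^2\,dA$ coming from $R=\lambda+1$ cancels the $-2\int H^2\,dA$ in $-2\int\lambda\,dA=2\mathcal{A}(M_\epsilon)-2\int H^2\,dA+\frac16\int\lvert\mathring{B}\rvert^2\,dA$. Solving then gives \eqref{AR5} on $M_\epsilon$, except that the last integral is $-\frac16\int_{M_\epsilon}\lvert\mathring{B}\rvert^2\,dA$ and there is an extra term $-\frac13\oint_{\partial M_\epsilon}(S\,ds-\overline{S}\,d\overline{s})$.

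\textbf{The more serious gap is the limit $\epsilon\to0^+$.} After this bookkeeping, $\int\lambda\,dA$ is gone. The only divergent interior term is $-\frac16\int_{M_\epsilon}\lvert\mathring{B}\rvert^2\,dA$, which generically blows up like $\epsilon^{-1}$ and is finite only when $\partial M$ is umbilic (Remark \ref{final}). Your plan aims instead at $\int\lambda\,dA$ through a Gauss--Bonnet identity for scalar curvature. No such identity exists in dimension four, since total scalar curvature is not topological. You also expect the $S$-flux to produce the Laplacian term, but in the paper the $S$-terms contribute nothing. Orthogonality makes $\partial M$ totally geodesic in $(M,\overline{g})$, so $\overline{S}=\mathcal{O}(\epsilon)$. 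And $\oint_{\partial M_\epsilon}S\,ds$ has no constant term by Lemma \ref{Claim}, a doubling argument against Chern--Gauss--Bonnet on the closed double. The Laplacian is inserted by hand, and Lemma \ref{laplacian} justifies it; this is the idea you are missing. Your remark that $H=\mathcal{O}(z^2)$ kills odd-order coefficients points the right way, but the precise chain is as follows. The vanishing of the $z$-coefficient of $H=z\overline{H}+\overline{\xi}_z$ (not merely $\overline{\xi}_z=\mathcal{O}(z)$) forces $3\overline{B}_{44}={\rm Trace}\,{\rm II}$ along $\partial M$. Hence $\partial_z(\lvert\overline{B}\rvert^2-4\overline{H}^2)=0$ there, so $\lvert\mathring{B}\rvert^2=z^2\lvert\mathring{\overline{B}}\rvert^2=b_2z^2+b_4z^4+\mathcal{O}(z^5)$ has no $z^3$ term. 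Then $\int_{M_\epsilon}\Delta\lvert\mathring{B}\rvert^2\,dA=\oint_{\partial M_\epsilon}\partial_\eta\lvert\mathring{B}\rvert^2\,ds=-\frac{2}{\epsilon}\oint_{\partial M_\epsilon}b_2\,d\overline{s}+\mathcal{O}(\epsilon)$ has zero finite part. It also cancels the $\frac{2}{\epsilon}\oint b_2\,d\overline{s}$ divergence of $2\int_{M_\epsilon}\lvert\mathring{B}\rvert^2\,dA$ coming from the coarea formula. This is also why your weight-counting cannot prove convergence of the last integral in \eqref{AR5}: $\lvert\mathring{B}\rvert^2\,dA=z^{-2}\lvert\mathring{\overline{B}}\rvert^2\,d\overline{A}$ is not integrable, and convergence comes only from that cancellation.
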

\textit{Proof.} Let $M$ be a hypersurface in $\mathbb{H}^5$ with mean curvature $H=\mathcal{O}(z^2)$. For a sufficiently small value $\epsilon>0$ fixed, the part $M_\epsilon=M\cap\{z\geq \epsilon\}$ of $M$ fully lies in $\mathbb{H}^5$. It then follows from \eqref{formulaarea} (recall that $n=2$) that
\begin{eqnarray}\label{long}
	\mathcal{A}(M_\epsilon)&=&\int_{M_\epsilon} \overline{H}^4\,d\overline{A}-\int_{M_\epsilon}H^4 \,dA-2\left(\int_{M_\epsilon} \overline{H}^2\overline{R}\,d\overline{A}-\int_{M_\epsilon} H^2 R\,dA\right)\nonumber\\
	&&+\left(\int_{M_\epsilon} \overline{\lambda}^2\,d\overline{A}-\int_{M_\epsilon}\lambda^2\,dA\right)-2\int_{M_\epsilon}\lambda\,dA\,.
\end{eqnarray}
On the one hand, the last integral in \eqref{long} can be rewritten from \eqref{extrinsic-intrinsic} and \eqref{ChenB} as
\begin{equation}\label{long2}
	-2\int_{M_\epsilon}\lambda\,dA=-2\int_{M_\epsilon}\left(R-1\right)dA=-2\int_{M_\epsilon}H^2\,dA+\frac{1}{6}\int_{M_\epsilon}\lvert\mathring{B}\rvert^2\,dA+2\mathcal{A}(M_\epsilon)\,.
\end{equation}
On the other hand, the difference between the Euclidean and hyperbolic total squared scalar curvatures can be simplified employing the Chern-Gauss-Bonnet formula \eqref{CGB}. More precisely,
\begin{eqnarray*}
	\int_{M_\epsilon}\overline{\lambda}^2\,d\overline{A}-\int_{M_\epsilon}\lambda^2\,dA&=&\left(\frac{4}{3}\pi^2\chi(M_\epsilon)-\frac{1}{24}\int_{M_\epsilon}\lvert \overline{W}\rvert^2\,d\overline{A}+\frac{1}{12}\int_{M_\epsilon}\lvert\overline{E}\rvert^2\,d\overline{A}-\frac{1}{3}\oint_{\partial M_\epsilon}\overline{S}\,d\overline{s}\right)\nonumber\\
	&&-\left(\frac{4}{3}\pi^2\chi(M_\epsilon)-\frac{1}{24}\int_{M_\epsilon}\lvert W\rvert^2\,dA+\frac{1}{12}\int_{M_\epsilon}\lvert E\rvert^2\,dA-\frac{1}{3}\oint_{\partial M_\epsilon}S\,ds\right)\nonumber\\
	&=&\frac{1}{12}\left(\int_{M_\epsilon}\lvert \overline{E}\rvert^2\,d\overline{A}-\int_{M_\epsilon}\lvert E\rvert^2\,dA\right)-\frac{1}{3}\left(\oint_{\partial M_\epsilon}\overline{S}\,d\overline{s}-\oint_{\partial M_\epsilon}S\,ds\right).
\end{eqnarray*}
Observe that in the last equality we have canceled out the terms with the Euler characteristic, since the topology of $M_\epsilon$ does not change by changing the metric. In addition, when conformally changing the metric according to $\overline{g}=z^2g$, the Weyl tensor changes pointwise as $\overline{W}=z^{-2} W$, hence the integrals involving $W$ and $\overline{W}$ also cancel out. In conclusion, combining \eqref{long} and \eqref{long2} and the above formula we see that
\begin{eqnarray*}
	\mathcal{A}(M_\epsilon)&=&\int_{M_\epsilon} H^4\,dA-\int_{M_\epsilon}\overline{H}^4\,d\overline{A}-2\left(\int_{M_\epsilon} H^2R\,dA-\int_{M_\epsilon}\overline{H}^2\overline{R}\,d\overline{A}\right)+2\int_{M_\epsilon}H^2\,dA\nonumber\\
	&&+\frac{1}{12}\left(\int_{M_\epsilon}\lvert E\rvert^2\,dA-\int_{M_\epsilon}\lvert\overline{E}\rvert^2\,d\overline{A}\right)-\frac{1}{3}\left(\oint_{\partial M_\epsilon}S\,ds-\oint_{\partial M_\epsilon}\overline{S}\,d\overline{s}\right)-\frac{1}{6}\int_{M_\epsilon}\lvert\mathring{B}\rvert^2\,dA\,.
\end{eqnarray*}
Since $H=\mathcal{O}(z^2)$, it follows from \eqref{HRhyp-euc} that $\overline{\xi}_z=0$ at $z=0$, that is, $M$ approaches the ideal boundary $\partial_\infty\mathbb{H}^5$ at a right angle. Thus, $\partial M$ is totally geodesic in $M$ (this assertion follows directly from the orthogonality condition and the definition of the second fundamental form, after employing Gauss' formula), and, hence, $\overline{S}=0$ at $z=0$ (c.f., \eqref{Scurvature}). We then deduce that $\overline{S}=\mathcal{O}(\epsilon)$ at $\partial M_\epsilon$ and so its integral does not contribute to the renormalized area $\mathcal{A}_R$ of $M$ since it has no constant term when expanding it as a combination of powers of $\epsilon>0$. On the other hand, Lemma \ref{Claim} shows that the other boundary integral has no constant term when expanding it in terms of $\epsilon>0$ either.

Finally, adapting the proof of Theorem 3.2 of \cite{T} (see Lemma \ref{laplacian}, for details), we deduce that the constant term of
$$\int_{M_\epsilon} 2\lvert \mathring{B}\rvert^2\,dA\,,$$
is the same as that of
$$\int_{M_\epsilon}\left(2\lvert \mathring{B}\rvert^2+\Delta \lvert\mathring{B}\rvert^2\right)dA\,,$$
with the advantage that this last integral is convergent as $\epsilon\to 0^+$. Thus, we substitute the integrals in the above expression for $\mathcal{A}(M_\epsilon)$. The first part of the proof then follows by noticing that from \eqref{extrinsic-intrinsic}, $\lambda=R-1$ holds and so we can combine the third and fifth integrals in that expression.

We now focus our attention on the convergence of the integrals arising in \eqref{AR5}. Clearly, all the Euclidean integrals are convergent. In addition, the last integral of \eqref{AR5} is also convergent as will be shown in Lemma \ref{laplacian}. Moreover, it follows from \eqref{HRhyp-euc} and $dA=z^{-4}d\overline{A}$ that
\begin{eqnarray*}
	\int_M H^4\,dA&=&\int_M\left(z\overline{H}+\overline{\xi}_z\right)^4\frac{1}{z^4}\,d\overline{A}\\
	&=&\int_M \overline{H}^4\,d\overline{A}+4\int_M \overline{H}\,\frac{\overline{\xi}_z^3}{z^3}\,d\overline{A}+6\int_M\overline{H}^2\,\frac{\overline{\xi}_z^2}{z^2}\,d\overline{A}+4\int_M\overline{H}^3\,\frac{\overline{\xi}_z}{z}\,d\overline{A}+\int_M\frac{\overline{\xi}_z^4}{z^4}\,d\overline{A}\,,
\end{eqnarray*}
where all the integrals converge using \emph{only} that $\overline{\xi}_z=\mathcal{O}(z)$. Similarly,
\begin{eqnarray*}
	\int_M H^2R\,dA&=&\int_M \overline{H}^2\overline{R}\,d\overline{A}+2\int_M \overline{H}^3\,\frac{\overline{\xi}_z}{z}\,d\overline{A}+5\int_M\overline{H}^2\,\frac{\overline{\xi}_z^2}{z^2}\,d\overline{A}+2\int_M\overline{H}\,\overline{R}\,\frac{\overline{\xi}_z}{z}\,d\overline{A} \\
	&&+4\int_M\overline{H}\,\frac{\overline{\xi}_z^3}{z^3}\,d\overline{A}+\int_M \overline{R}\,\frac{\overline{\xi}_z^2}{z^2}\,d\overline{A}+\int_M\frac{\overline{\xi}_z^4}{z^4}\,d\overline{A}<\infty\,,
\end{eqnarray*}
\emph{only} from $\overline{\xi}_z=\mathcal{O}(z)$.

The convergence of the total squared mean curvature integral follows from the asymptotically minimal of order $2$ condition (see also Remark \ref{cond} below). Hence, it only remains to check the convergence of the integral of the square of the trace-free Ricci tensor. To see this, observe that $\lvert E\rvert^2$ can be rewritten as
\begin{equation}\label{Ricci}
	\lvert E\rvert^2=\lvert\mathring{B}^2\rvert^2-4H{\rm Trace}(\mathring{B}^3)+4H^2\lvert\mathring{B}\rvert^2-\frac{1}{4}\lvert\mathring{B}\rvert^4\,.
\end{equation}
This expression follows from the definition of the Ricci tensor and the Gauss' equation. To deduce it in a simple way, it is useful to employ \eqref{RinB}, \eqref{extrinsic-intrinsic}, and the relation between the second fundamental form $B$ and its trace-free counterpart $\mathring{B}$. It then follows from \eqref{HRhyp-euc} that each term in \eqref{Ricci} are $\mathcal{O}(z^4)$ and so the integral of $\lvert E\rvert^2$ converges. \hfill$\square$

\begin{rem}\label{cond} Recall that a hypersurface $M$ in $\mathbb{H}^5$ is asymptotically minimal (of order $2$) if $H=\mathcal{O}(z^2)$. In particular, we obtain from \eqref{HRhyp-euc} that $\overline{\xi}_z=\mathcal{O}(z)$. Hence, necessarily $M$ must meet the ideal boundary $\partial_\infty\mathbb{H}^5$ at a right angle. However, the asymptotically minimal condition in this case is stronger than this orthogonality condition (contrary to the case of surfaces). 
	
We point out here that most of the steps on the proof of Theorem \ref{t3} will follow exactly the same if we just assume that $M$ meets $\partial_\infty\mathbb{H}^5$ at a right angle. We are only using that $H=\mathcal{O}(z^2)$ to show the convergence of the total squared mean curvature and to apply Lemma \ref{laplacian}. More precisely, from $dA=z^{-4}d\overline{A}$, it follows that $H=\mathcal{O}(z^2)$ is equivalent to 
$$\int_M H^2\,dA<\infty\,.$$
\end{rem}

We next use the general relation \eqref{AR5}, together with the Chern-Gauss-Bonnet formula \eqref{CGB} and Chen's result \cite[Theorem 1]{C}, to deduce an extension of the formula of Theorem 1.1 of \cite{T} for the renormalized area of minimal hypersurfaces in $\mathbb{H}^5$ to the more general case.

\begin{corollary}\label{c2} Let $M$ be an asymptotically minimal (of order $2$) hypersurface in $\mathbb{H}^5$. Then, the renormalized area of $M$ satisfies
	\begin{eqnarray}\label{GBT5}
		\mathcal{A}_R(M)&=&\frac{4}{3}\pi^2\chi(M)-\frac{1}{144}\int_M\lvert \mathring{B}\rvert^4\,dA-\frac{1}{24}\int_M\lvert W\rvert^2\,dA+\frac{1}{12}\int_M\lvert E\rvert^2\,dA\nonumber\\
		&&+\mathcal{B}(M)-2\int_M H^2\lambda\,dA-\frac{1}{12}\int_M \left(2\lvert\mathring{B}\rvert^2+\Delta\lvert\mathring{B}\rvert^2\right) dA \,,
	\end{eqnarray}
	where $\chi(M)$ is the Euler characteristic, $\mathring{B}$ is the trace-free second fundamental form, $W$ is the Weyl tensor, $\lvert E\rvert^2$ is the square of the trace-free Ricci tensor, $\mathcal{B}(M)$ represents the bending of $M$, and $H$ and $\lambda$ are the mean and scalar curvatures of $M$ in $\mathbb{H}^5$. 
\end{corollary}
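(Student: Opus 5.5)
The plan is to start from the general relation \eqref{AR5} of Theorem \ref{t3} and eliminate every Euclidean integral in it, using Chen's invariance \eqref{equality} together with the Chern-Gauss-Bonnet formula \eqref{CGB} applied to $M$ with its Euclidean metric. This parallels the passage from Theorem \ref{t2} to Corollary \ref{c1}. Gathering the Euclidean pieces of \eqref{AR5}, we must rewrite
$$-\int_M\overline{H}^4\,d\overline{A}+2\int_M\overline{H}^2\overline{\lambda}\,d\overline{A}-\frac{1}{12}\int_M\lvert\overline{E}\rvert^2\,d\overline{A}\,.$$

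\textbf{Step 1 (Chen's invariance).} Squaring \eqref{equality} and using that $\dim M=4$, we get $(\overline{H}^2-\overline{\lambda})^2\,d\overline{A}=(H^2-R)^2\,dA$. Expanding the left-hand side gives
$$\int_M\overline{H}^4\,d\overline{A}-2\int_M\overline{H}^2\overline{\lambda}\,d\overline{A}=\int_M(H^2-R)^2\,dA-\int_M\overline{\lambda}^2\,d\overline{A}\,.$$
By \eqref{ChenB} with $n=2$, the first integral on the right equals $\frac{1}{144}\int_M\lvert\mathring{B}\rvert^4\,dA$. It is finite because the left-hand side consists of Euclidean integrals over a compact manifold. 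So the Euclidean block above becomes
$$-\frac{1}{144}\int_M\lvert\mathring{B}\rvert^4\,dA+\int_M\overline{\lambda}^2\,d\overline{A}-\frac{1}{12}\int_M\lvert\overline{E}\rvert^2\,d\overline{A}\,.$$

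\textbf{Step 2 (Euclidean Chern-Gauss-Bonnet).} Apply \eqref{CGB} to the compact $4$-manifold $M$ with the metric induced from $\mathbb{R}^5$:
$$\int_M\overline{\lambda}^2\,d\overline{A}=\frac{4}{3}\pi^2\chi(M)-\frac{1}{24}\int_M\lvert\overline{W}\rvert^2\,d\overline{A}+\frac{1}{12}\int_M\lvert\overline{E}\rvert^2\,d\overline{A}-\frac{1}{3}\oint_{\partial M}\overline{S}\,d\overline{s}\,.$$
Here I will argue that the boundary term vanishes identically. As noted in the proof of Theorem \ref{t3}, asymptotic minimality forces $M$ to meet $\{z=0\}$ orthogonally. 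Hence $\partial M$ is totally geodesic in $(M,\overline{g})$, so $L=0$ and $h=0$, and every term of \eqref{Scurvature} vanishes. Substituting, the $\lvert\overline{E}\rvert^2$ integrals cancel exactly.

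\textbf{Step 3 (Weyl term).} In dimension four, $\lvert W\rvert^2\,dA$ is pointwise conformally invariant, since $\overline{W}=z^{-2}W$ and $d\overline{A}=z^4\,dA$. Therefore $\int_M\lvert\overline{W}\rvert^2\,d\overline{A}=\int_M\lvert W\rvert^2\,dA$, and this also shows the hyperbolic Weyl integral converges.

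Collecting terms, the Euclidean block equals
$$\frac{4}{3}\pi^2\chi(M)-\frac{1}{144}\int_M\lvert\mathring{B}\rvert^4\,dA-\frac{1}{24}\int_M\lvert W\rvert^2\,dA\,.$$
Adding back the hyperbolic terms of \eqref{AR5}, namely $\mathcal{B}(M)$, $-2\int_M H^2\lambda\,dA$, $\frac{1}{12}\int_M\lvert E\rvert^2\,dA$ and the Laplacian term, yields \eqref{GBT5}.

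\textbf{Main obstacle.} The delicate point is legitimacy: splitting the convergent differences in \eqref{AR5} into separately convergent integrals. Theorem \ref{t3} already guarantees convergence of each hyperbolic integral, and Steps 1 and 3 give convergence of the $\lvert\mathring{B}\rvert^4$ and $\lvert W\rvert^2$ integrals. Applying \eqref{CGB} directly on $M$ rather than on $M_\epsilon$ is also justified, because the Euclidean metric is smooth up to $\partial M$.
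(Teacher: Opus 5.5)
Your proposal is correct and follows the paper's proof essentially step for step. You rearrange \eqref{AR5}, use Chen's invariance to trade $\int_M(\overline{H}^2-\overline{\lambda})^2\,d\overline{A}$ for $\frac{1}{144}\int_M\lvert\mathring{B}\rvert^4\,dA$, apply the Euclidean Chern--Gauss--Bonnet formula \eqref{CGB} on $M$ with the boundary term removed by orthogonality, and finish with the pointwise conformal invariance of $\lvert W\rvert^2\,dA$; your explicit check that $L=0$ forces $\overline{S}=0$ and your remarks on convergence only spell out what the paper states briefly.
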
 
\textit{Proof.} Assume $M$ is an asymptotically minimal hypersurface in $\mathbb{H}^5$. From Theorem \ref{t3} we get (after suitably rearranging)
\begin{eqnarray}
	\mathcal{A}_R(M)&=&\int_M H^4\,dA-2\int_MH^2 \lambda\,dA+\frac{1}{12}\int_M \lvert E\rvert^2\,dA-\frac{1}{12}\int_M\left(2\lvert\mathring{B}\rvert^2+\Delta\lvert\mathring{B}\rvert^2\right)dA\nonumber\\
	&&-\int_M \left(\overline{H}^2-\overline{R}\right)^2\,d\overline{A}+\int_M\overline{\lambda}^2\,d\overline{A}-\frac{1}{12}\int_M \lvert\overline{E}\rvert^2\,d\overline{A}\,,\label{help}
\end{eqnarray}
where we have used that $\overline{\lambda}=\overline{R}$ holds from \eqref{extrinsic-intrinsic}.

The first integral on the second line of \eqref{help} can be rewritten employing the relation \eqref{ChenB} and Theorem 1 of \cite{C} (see also \eqref{equality} above). Indeed, we have
$$\int_M \left(\overline{H}^2-\overline{R}\right)^2\,d\overline{A}=\int_M\left(H^2-R\right)^2\,dA=\frac{1}{144}\int_M \lvert\mathring{B}\rvert^4\,dA\,.$$
Moreover, for the last two integrals in \eqref{help} we use the Chern-Gauss-Bonnet formula \eqref{CGB} to rewrite them, obtaining
$$\int_M \overline{\lambda}^2\,d\overline{A}-\frac{1}{12}\int_M \lvert\overline{E}\rvert^2\,d\overline{A}=\frac{4}{3}\pi^2\chi(M)-\frac{1}{24}\int_M\lvert \overline{W}\rvert^2\,d\overline{A}-\frac{1}{3}\oint_{\partial M} \overline{S}\,d\overline{s}\,.$$
Observe that the boundary integral vanishes since $M$ approaches $\partial_\infty \mathbb{H}^5$ at a right angle (c.f., proof of Theorem \ref{t3}).

Combining everything and substituting it in \eqref{help} we conclude with
\begin{eqnarray*}
	\mathcal{A}_R(M)&=&\int_M H^4\,dA-2\int_M H^2\lambda\,dA+\frac{1}{12}\int_M \lvert E\rvert^2\,dA-\frac{1}{12}\int_M\left(2\lvert\mathring{B}\rvert^2+\Delta \lvert\mathring{B}\rvert^2\right)dA\\&&-\frac{1}{144}\int_M\lvert\mathring{B}\rvert^4\,dA+\frac{4}{3}\pi^2\chi(M)-\frac{1}{24}\int_M \lvert \overline{W}\rvert^2\,d\overline{A}\,.
\end{eqnarray*}
The result then follows since the Weyl tensor satisfies $\lvert\overline{W}\rvert^2d\overline{A}=\lvert W\rvert^2dA$. \hfill$\square$
\\

Observe that the Gauss-Bonnet type formula \eqref{GBT5} for a hypersurface in $\mathbb{H}^5$ is decomposed in three different parts: a topological part involving the Euler characteristic, a conformal invariant part which is composed by a multiple of the integral of Chen's quantity and another term related to the Weyl tensor, and an extrinsic non-conformally invariant part involving the curvatures of the hypersurface in $\mathbb{H}^5$. This decomposition is analogous to that of the lower dimensional case $n=1$ (see Corollary \ref{c1}).

Assume that $M$ is a minimal hypersurface in $\mathbb{H}^5$. Then, $H=0$ holds identically and, in particular, $M$ is asymptotically minimal. In addition, the bending of $M$ vanishes. Similarly, the second to last integral in \eqref{GBT5} is also zero. Therefore, we obtain that the renormalized area of a minimal hypersurface in $\mathbb{H}^5$ satisfies
\begin{eqnarray*}
		\mathcal{A}_R(M)&=&\frac{4}{3}\pi^2\chi(M)-\frac{1}{144}\int_M \lvert\mathring{B}\rvert^4\,dA-\frac{1}{24}\int_M\lvert W\rvert^2\,dA\\
		&&+\frac{1}{12}\int_M\lvert E\rvert^2\,dA-\frac{1}{12}\int_M\left(2\lvert\mathring{B}\rvert^2+\Delta\lvert \mathring{B}\rvert^2\right)dA\,.
\end{eqnarray*}
This expression was first obtained in Theorem 1.1 of \cite{T}.

\section{Hypersurfaces in Poincar\'e-Einstein Manifolds}

In this section we extend the Gauss-Bonnet type formulas given in Corollaries \ref{c1} and \ref{c2} to the case where the ambient manifold is a Poincar\'e-Einstein space. We begin by briefly reviewing the definition of these spaces (for more details, we refer the reader to \cite{AM,FG,G,T}).

Let $X^{2n+1}$ be a $(2n+1)$-dimensional compact manifold with boundary $\partial X^{2n+1}$ endowed with a complete Riemannian metric $\widetilde{g}$ defined on the interior $\mathring{X}^{2n+1}$. In what follows, we will denote the interior of $X^{2n+1}$ by $\widetilde{N}^{2n+1}=\mathring{X}^{2n+1}$ and refer to $\partial X^{2n+1}$ as the asymptotic boundary.

A function $r:X^{2n+1}\longrightarrow\mathbb{R}$ is a defining function for the boundary $\partial X^{2n+1}$ if $r>0$ holds on the interior $\widetilde{N}^{2n+1}=\mathring{X}^{2n+1}$, $r=0$ holds on the boundary $\partial X^{2n+1}$, and $dr\neq 0$ on $\partial X^{2n+1}$. The manifold $\widetilde{N}^{2n+1}=\mathring{X}^{2n+1}$ together with the metric $\widetilde{g}$ is said to be conformally compact if $r^2\widetilde{g}$ extends as a (smooth and nondegenerate) metric to $X^{2n+1}$, where $r$ is a defining function for $\partial X^{2n+1}$. The sectional curvatures of $\widetilde{N}^{2n+1}$ tend to $-\lvert dr\rvert^2_{r^2\widetilde{g}}$ as the points approach the boundary $\partial X^{2n+1}$. In particular, if $\lvert dr\rvert^2_{r^2 \widetilde{g}}=1$ holds along $\partial X^{2n+1}$, the manifold $\widetilde{N}^{2n+1}$ with the metric $\widetilde{g}$ is referred to as asymptotically hyperbolic. If $\lvert dr\rvert^2_{r^2\widetilde{g}}=1$ holds in a neighborhood of $\partial X^{2n+1}$, rather than just at the asymptotic boundary itself, the function $r$ is said to be a special defining function for $\partial X^{2n+1}$. As shown by Graham \cite{G}, asymptotically hyperbolic manifolds admit special defining functions.

A conformally compact manifold $(\widetilde{N}^{2n+1},\widetilde{g}\,)$ which satisfies the Einstein condition
\begin{equation}\label{Einstein}
	\widetilde{{\rm Ric}}=-2n \widetilde{g}\,,
\end{equation}
is called a Poincar\'e-Einstein space. Here, $\widetilde{{\rm Ric}}$ denotes the Ricci tensor of $\widetilde{N}^{2n+1}$. These spaces are special cases of asymptotically hyperbolic manifolds. Particular examples of Poincar\'e-Einstein spaces are the hyperbolic spaces $\mathbb{H}^{2n+1}$. For these spaces the asymptotic boundary is the one-point compactification of what we defined as the ideal boundary $\partial_\infty\mathbb{H}^{2n+1}=\{z=0\}$ in previous sections.

From now on, we will assume that $(\widetilde{N}^{2n+1},\widetilde{g}\,)$ is a Poincar\'e-Einstein space. It then follows from the standard Ricci decomposition of the Riemann curvature $\widetilde{{\rm Rm}}$ of $\widetilde{N}^{2n+1}$ and the Einstein condition \eqref{Einstein} that for every $i,j,k,l=1,...,2n+1$,
\begin{eqnarray}\label{RmTilde}
\widetilde{{\rm Rm}}(e_i,e_j,e_k,e_l)&=&\widetilde{g}(e_i,e_l)\widetilde{g}(e_j,e_k)-\widetilde{g}(e_i,e_k)\widetilde{g}(e_j,e_l)+\widetilde{W}(e_i,e_j,e_k,e_l)\nonumber\\
&=&\delta_{il}\delta_{jk}-\delta_{ik}\delta_{jl}+\widetilde{W}(e_i,e_j,e_k,e_l)\,,
\end{eqnarray}
where $\delta$ represents the Kronecker delta, $\widetilde{W}$ is the Weyl tensor of $\widetilde{N}^{2n+1}$, and $\{e_1,...,e_{2n},e_{2n+1}\}$ is an orthonormal frame. (When considering a hypersurface $M$ of $\widetilde{N}^{2n+1}$ the orthonormal frame will be taken to be adapted to $M$, meaning that $e_{2n+1}=\xi$ will be the unit normal to $M$.)

Let $M$ be a smooth and compact hypersurface with boundary embedded in a Poincar\'e-Einstein space $\widetilde{N}^{2n+1}$. All the definitions and formulas of Section 2 extend directly to this setting (substituting $z$ by the defining function $r$, when necessary). The only difference is that the sectional curvature of the ambient space $\widetilde{N}^{2n+1}$ may be now nonconstant (to distinguish it from that of a space form, we denote it here by $\mu$). Indeed, it follows from \eqref{RmTilde} that for every $i,j=1,...,2n$ and $i\neq j$,
\begin{equation}\label{mu}
	\mu_{ij}=\mu(e_i,e_j)=\widetilde{{\rm Rm}}(e_i,e_j,e_i,e_j)=-1+\widetilde{W}(e_i,e_j,e_i,e_j)=-1+\widetilde{W}_{ij}\,,
\end{equation}
where we have introduced the simpler notation $\widetilde{W}_{ij}\equiv \widetilde{W}(e_i,e_j,e_i,e_j)$. From this restricted to the orthonormal frame of principal directions $\{v_1,...,v_{2n}\}$, the Gauss' equation, and the definition \eqref{HandR} of the extrinsic scalar curvature, we obtain
\begin{eqnarray*}
	R&=&\frac{1}{2n(2n-1)}\sum_{i\neq j}\kappa_i\kappa_j=\frac{1}{2n(2n-1)}\sum_{i\neq j} \left(K(v_i,v_j)-\mu_{ij}\right)\\
	&=&\frac{1}{2n(2n-1)}\left(\sum_{i\neq j} K(v_i,v_j)-\sum_{i\neq j}(-1+\widetilde{W}_{ij})\right)\\
	&=&\lambda+1-\frac{1}{2n(2n-1)}\sum_{i\neq j} \widetilde{W}_{ij}\,,
\end{eqnarray*}
where we have used the definition \eqref{lambda} of the (intrinsic) scalar curvature. For simplicity, we define
\begin{equation}\label{W*}
	\widetilde{W}_*=\frac{1}{2n(2n-1)}\sum_{i\neq j} \widetilde{W}_{ij}=\frac{1}{2n(2n-1)}\sum_{i=1}^{2n}\sum_{j=1}^{2n} \widetilde{W}_{ij}\,,
\end{equation}
where the last equality follows since $\widetilde{W}_{ii}=0$ for every $i=1,...,2n+1$ (c.f., \eqref{RmTilde}). Observe that since the Weyl tensor $\widetilde{W}$ is trace-free, the quantity $\widetilde{W}_*$ is zero. Indeed, for every $i=1,...,2n+1$ fixed, we have
$$0={\rm Trace}(\widetilde{W})=\sum_{j=1}^{2n+1}\widetilde{W}_{ij}=\sum_{j=1}^{2n}\widetilde{W}_{ij}+\widetilde{W}(e_i,\xi,e_i,\xi)\,,$$
since $e_{2n+1}=\xi$. Adding this from $i=1$ to $i=2n$ and taking into account that $\widetilde{W}_{ij}=\widetilde{W}_{ji}$ (see \eqref{RmTilde} again), we then get
\begin{eqnarray*}
	2n(2n-1)\widetilde{W}_*&=&\sum_{i=1}^{2n}\sum_{j=1}^{2n}\widetilde{W}_{ij}=-\sum_{i=1}^{2n}\widetilde{W}(e_i,\xi,e_i,\xi)=-\sum_{i=1}^{2n+1}\widetilde{W}(e_i,\xi,e_i,\xi)+\widetilde{W}(\xi,\xi,\xi,\xi)\\&=&-\sum_{i=1}^{2n+1}\widetilde{W}(\xi,e_i,\xi,e_i)=-{\rm Trace}(\widetilde{W})=0\,.
\end{eqnarray*} 
Hence, \eqref{extrinsic-intrinsic} reads in our case
\begin{equation}\label{extrinsic-intrinsic-general}
	R=\lambda+1\,.
\end{equation}

Assume that the boundary $\partial M$ lies in the asymptotic boundary $\partial X^{2n+1}$ and that the mean curvature $H$ of the hypersurface $M$ satisfies $H=\mathcal{O}(r^{n})$. We refer to this property simply saying that $M$ is asymptotically minimal (of order $n$). To understand the renormalized area $\mathcal{A}_R$ of $M$, we consider the truncated hypersurfaces $M_\epsilon=M\cap\{r\geq \epsilon\}$, for $\epsilon>0$ sufficiently small. For these hypersurfaces we use \eqref{extrinsic-intrinsic-general} and expand
\begin{equation*}
	R^n=\left(\lambda+1\right)^n=1+\sum_{i=1}^{n}\begin{pmatrix} n \\ i \end{pmatrix} \lambda^i\,.
\end{equation*}
Integrating over $M_\epsilon$ and rearranging, we obtain
\begin{equation}\label{expand}
	\mathcal{A}(M_\epsilon)=\int_{M_\epsilon} R^n\,d\widetilde{A}-\sum_{i=1}^n\begin{pmatrix} n \\ i \end{pmatrix} \int_{M_\epsilon}\lambda^i\,d\widetilde{A}\,,
\end{equation}
where $d\widetilde{A}$ is the volume element on $M$ induced from the metric $\widetilde{g}$ of the Poincar\'e-Einstein ambient space $\widetilde{N}^{2n+1}$. The approach now will consist on adding and subtracting the necessary terms to the right-hand side of \eqref{expand} until the integral of Chen's conformal invariant quantity $\left(H^2-R\right)^n\widetilde{g}$ arises. This gives
$$\mathcal{A}(M_\epsilon)=(-1)^n\int_{M_\epsilon}\left(H^2-R\right)^n\,d\widetilde{A}-\sum_{i=0}^{n-1}(-1)^{n+i}\begin{pmatrix} n \\ i \end{pmatrix}\int_{M_\epsilon} H^{2(n-i)}R^i\,d\widetilde{A}-\sum_{i=1}^n\begin{pmatrix} n \\ i \end{pmatrix} \int_{M_\epsilon}\lambda^i\,d\widetilde{A}\,.$$
Hence, employing \eqref{ChenB} and considering the constant term of the expansion of $\mathcal{A}(M_\epsilon)$ in terms of $\epsilon>0$, we deduce that
\begin{eqnarray}\label{renormalizedgeneral}
	\mathcal{A}_R(M)&=&\left(\frac{-1}{2n(2n-1)}\right)^n\int_M \lvert\mathring{B}\rvert^{2n}\,d\widetilde{A}-\sum_{i=0}^{n-1}(-1)^{n+i}\begin{pmatrix} n \\ i\end{pmatrix}\int_M H^{2(n-i)}R^i\,d\widetilde{A}\nonumber\\
	&&-\sum_{i=1}^n\begin{pmatrix} n\\i\end{pmatrix} {\rm f.p.}\int_M \lambda^i\,d\widetilde{A}\,.
\end{eqnarray}
The integrals above converge, but perhaps for those with ${\rm f.p.}$ in front. We must rewrite these potentially divergent integrals to properly understand the renormalized area.

We next show all the details for the cases $n=1$ and $n=2$ separately. In these cases, we will employ the Chern-Gauss-Bonnet formula to rewrite the integral of $\lambda^n$. 

\begin{proposition}\label{last1}
	Let $M$ be a surface in a $3$-dimensional Poincar\'e-Einstein space $\widetilde{N}^3$ approaching the asymptotic boundary at a right angle. Then, the renormalized area of $M$ satisfies
	\begin{equation}\label{RA3}
		\mathcal{A}_R(M)=-2\pi\chi(M)-\frac{1}{2}\int_M \lvert \mathring{B}\rvert^2\,d\widetilde{A}+\widetilde{\mathcal{B}}(M)\,,
	\end{equation}
where $\chi(M)$ is the Euler characteristic of $M$, $\lvert\mathring{B}\rvert^2$ is the square of the trace-free second fundamental form of $M$ in $\widetilde{N}^3$, and $\widetilde{\mathcal{B}}(M)$ is the bending of $M$ in $\widetilde{N}^3$.
\end{proposition}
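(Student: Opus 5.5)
Specialize the general formula \eqref{renormalizedgeneral} to $n=1$. Since $\binom{1}{0}=\binom{1}{1}=1$ and $(-1/2)^1=-1/2$, it gives
$$\mathcal{A}_R(M)=-\frac{1}{2}\int_M\lvert\mathring{B}\rvert^2\,d\widetilde{A}+\int_M H^2\,d\widetilde{A}-{\rm f.p.}\int_M\lambda\,d\widetilde{A}\,.$$
For $n=1$ the quantity $\lambda$ is just the Gaussian curvature $K$ of $M$ with respect to the metric induced by $\widetilde{g}$. The claimed formula \eqref{RA3} therefore follows once we show that ${\rm f.p.}\int_M K\,d\widetilde{A}=2\pi\chi(M)$.

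Before doing so, I check that the other two integrals converge, as already asserted after \eqref{renormalizedgeneral}. Write $\overline{g}=r^2\widetilde{g}$ for the compactified metric, so that $d\widetilde{A}=r^{-2}d\overline{A}$. The transformation rule \eqref{HRhyp-euc}, with $z$ replaced by $r$, gives $H=r\overline{H}+\overline{\xi}(r)$. Right-angle contact means $\overline{\xi}(r)=\mathcal{O}(r)$, hence $H^2=\mathcal{O}(r^2)$ and $\int_M H^2\,d\widetilde{A}<\infty$. Since $\lvert\mathring{B}\rvert^2\,d\widetilde{A}=\lvert\overline{\mathring{B}}\rvert^2\,d\overline{A}$ is conformally invariant (Chen's theorem, \eqref{ChenB}), the first integral also converges.

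For the finite part, apply the Gauss--Bonnet formula \eqref{GB} to $M_\epsilon=M\cap\{r\ge\epsilon\}$ with the metric induced from $\widetilde{g}$:
$$\int_{M_\epsilon}K\,d\widetilde{A}=2\pi\chi(M_\epsilon)-\oint_{\partial M_\epsilon}\widetilde{\kappa}_g\,d\widetilde{s}\,.$$
For small $\epsilon$ we have $\chi(M_\epsilon)=\chi(M)$. It remains to show that the boundary term has no constant term in its $\epsilon$-expansion, which is the analogue of (3.11) of \cite{AM}.

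Under the conformal change $\widetilde{g}=r^{-2}\overline{g}$ restricted to $M$, geodesic curvature transforms as $\widetilde{\kappa}_g=r\,\overline{\kappa}_g-\partial_{\overline{\nu}}r$, and $d\widetilde{s}=r^{-1}d\overline{s}$. Here $\overline{\nu}$ is the $\overline{g}$-unit inward normal to the level curve $\partial M_\epsilon$ inside $M$. This gives
$$\oint_{\partial M_\epsilon}\widetilde{\kappa}_g\,d\widetilde{s}=\oint_{\partial M_\epsilon}\overline{\kappa}_g\,d\overline{s}-\frac{1}{\epsilon}\oint_{\partial M_\epsilon}\lvert\overline{\nabla}^M r\rvert\,d\overline{s}\,,$$
and the first term on the right is bounded. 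The main obstacle is to show that its constant term vanishes and that the second term has the form $c_{-1}\epsilon^{-1}+\mathcal{O}(\epsilon)$, with no $\epsilon^0$ term. This is where the special defining function (so $\lvert dr\rvert_{\overline{g}}=1$ near the boundary) and right-angle contact enter.

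Right-angle contact gives $\lvert\overline{\nabla}^M r\rvert=1+\mathcal{O}(r^2)$, because the normal component $\overline{\xi}(r)$ is $\mathcal{O}(r)$ and the deficit is quadratic in it. It also makes $\partial M$ a $\overline{g}$-geodesic of $M$: the asymptotic boundary is totally geodesic for $\overline{g}$ when $r$ is special, or, if it is not, this is absorbed by the same argument as in \cite{AM}. Hence $\oint_{\partial M_\epsilon}\overline{\kappa}_g\,d\overline{s}=\mathcal{O}(\epsilon)$. Finally, the length $L(\epsilon)$ of $\partial M_\epsilon$ is smooth in $\epsilon$ with $L'(0)=0$, again by orthogonality. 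Thus $\epsilon^{-1}L(\epsilon)=L(0)\epsilon^{-1}+\mathcal{O}(\epsilon)$, with no constant term.

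Putting these together, ${\rm f.p.}\int_M\lambda\,d\widetilde{A}=2\pi\chi(M)$, and substituting into the specialized formula yields \eqref{RA3}.
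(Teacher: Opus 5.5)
Your proposal is correct and follows the paper's proof: you specialize to $n=1$, apply Gauss--Bonnet on $M_\epsilon$, and show that $\oint_{\partial M_\epsilon}\widetilde{\kappa}_g\,d\widetilde{s}$ has no constant term. The only difference is that the paper simply cites (3.11) of \cite{AM} for that last fact, whereas you derive it from the conformal change of geodesic curvature, orthogonality, and the total geodesicity of the asymptotic boundary for $\overline{g}$ (this comes from the Einstein condition forcing $\partial_r g_r|_{r=0}=0$, not merely from $r$ being special), together with the harmless sign slip that with $\overline{\nu}$ inward the correct rule is $\widetilde{\kappa}_g=r\,\overline{\kappa}_g+\partial_{\overline{\nu}}r$ (horocycles in $\mathbb{H}^2$ have $\widetilde{\kappa}_g=+1$), so the $\epsilon^{-1}L(\epsilon)$ term enters with a plus sign, which does not affect the vanishing of the constant term.
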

\textit{Proof.} Let $\widetilde{N}^3$ be a Poincar\'e-Einstein space and consider a surface $M$ approaching the asymptotic boundary at a right angle. For $\epsilon>0$ sufficiently small we denote by $M_\epsilon=M\cap\{r\geq \epsilon\}$ the truncated surfaces.

For $n=1$, equation \eqref{expand} reduces to
$$\mathcal{A}(M_\epsilon)=\int_{M_\epsilon} R\,d\widetilde{A}-\int_{M_\epsilon}\lambda\,d\widetilde{A}\,.$$
Adding and subtracting the bending $\widetilde{\mathcal{B}}(M)$ of $M$ in $\widetilde{N}^3$ and applying the Gauss-Bonnet formula \eqref{GB} to the last integral above (recall that for surfaces, $\lambda=K$ is the Gaussian curvature), we obtain
\begin{eqnarray*}
	\mathcal{A}(M_\epsilon)&=&\int_{M_\epsilon}H^2\,d\widetilde{A}-\int_{M_\epsilon}\left(H^2-R\right)d\widetilde{A}-2\pi\chi(M_\epsilon)+\oint_{\partial M_\epsilon} \kappa_g\,d\widetilde{s}\\
	&=&-2\pi\chi(M_\epsilon)-\frac{1}{2}\int_{M_\epsilon}\lvert \mathring{B}\rvert^2\,d\widetilde{A}+\int_{M_\epsilon} H^2\,d\widetilde{A}+\oint_{\partial M_\epsilon}\kappa_g\,d\widetilde{s}\,,
\end{eqnarray*}
where in the last equality we have used \eqref{ChenB}.

Finally, we deduce from (3.11) of \cite{AM} that the total geodesic curvature term above has no constant term when considering its expansion in terms of $\epsilon>0$. This concludes the proof. \hfill$\square$
\\

The above result and proof are essentially the same as those of Proposition 3.1 of \cite{AM}. We have opted to include them here since they serve as an illustration for the general process and, in particular, for the following higher dimensional case $n=2$.

\begin{proposition}\label{last2} Let $M$ be an asymptotically minimal (of order $2$) hypersurface in a $5$-dimensional Poincar\'e-Einstein space $\widetilde{N}^5$. Then, the renormalized area of $M$ satisfies
	\begin{eqnarray}\label{RA5}
		\mathcal{A}_R(M)&=&\frac{4}{3}\pi^2\chi(M)-\frac{1}{144}\int_M\lvert \mathring{B}\rvert^4\,d\widetilde{A}-\frac{1}{24}\int_M\lvert W\rvert^2\,d\widetilde{A}+\frac{1}{12}\int_M\lvert E\rvert^2\,d\widetilde{A}\nonumber\\
		&&+\widetilde{\mathcal{B}}(M)-2\int_MH^2\lambda\,d\widetilde{A}-\frac{1}{12}\int_M \left(2\lvert\mathring{B}\rvert^2+\Delta \lvert\mathring{B}\rvert^2\right)d\widetilde{A}\,,
	\end{eqnarray}
	where $\chi(M)$ is the Euler characteristic, $\mathring{B}$ is the trace-free second fundamental form, $W$ is the Weyl tensor, $\lvert E\rvert^2$ is the square of the trace-free Ricci tensor, $\widetilde{\mathcal{B}}(M)$ represents the bending of $M$, and $H$ and $\lambda$ are the mean and scalar curvatures of $M$ in $\widetilde{N}^5$. 
\end{proposition}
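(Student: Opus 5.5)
We follow the scheme of Proposition \ref{last1}, now with $n=2$ in \eqref{expand}. For $\epsilon>0$ small, \eqref{expand} gives
$$\mathcal{A}(M_\epsilon)=\int_{M_\epsilon}R^2\,d\widetilde{A}-2\int_{M_\epsilon}\lambda\,d\widetilde{A}-\int_{M_\epsilon}\lambda^2\,d\widetilde{A}\,.$$
We complete the square to make Chen's quantity appear, $R^2=(H^2-R)^2-H^4+2H^2R$, and use \eqref{ChenB} with $n=2$, i.e. $H^2-R=\frac{1}{12}\lvert\mathring{B}\rvert^2$. Writing $R=\lambda+1$ by \eqref{extrinsic-intrinsic-general}, we get $2H^2R-H^4=-H^4+2H^2\lambda+2H^2$. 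For the linear term we argue as in \eqref{long2}:
$$-2\lambda=-2R+2=-2H^2+\tfrac{1}{6}\lvert\mathring{B}\rvert^2+2\,.$$
This turns $-2\int_{M_\epsilon}\lambda$ into $2\mathcal{A}(M_\epsilon)-2\int_{M_\epsilon}H^2+\frac16\int_{M_\epsilon}\lvert\mathring{B}\rvert^2$. Moving $2\mathcal{A}(M_\epsilon)$ to the left and multiplying by $-1$ leaves
$$\mathcal{A}(M_\epsilon)=-\tfrac{1}{144}\int_{M_\epsilon}\lvert\mathring{B}\rvert^4+\int_{M_\epsilon}H^4-2\int_{M_\epsilon}H^2\lambda-\tfrac16\int_{M_\epsilon}\lvert\mathring{B}\rvert^2+\int_{M_\epsilon}\lambda^2\,,$$
all integrals taken with respect to $d\widetilde{A}$. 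We will recheck these signs carefully against \eqref{RA5}. The target has $+\widetilde{\mathcal{B}}(M)-2\int H^2\lambda$, and the $\lambda^2$ term will supply $\frac43\pi^2\chi$, the Weyl term and the $E$ term.

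Next we apply the Chern-Gauss-Bonnet formula \eqref{CGB} to $M_\epsilon$ with its induced metric:
$$\int_{M_\epsilon}\lambda^2\,d\widetilde{A}=\tfrac43\pi^2\chi(M_\epsilon)-\tfrac1{24}\int_{M_\epsilon}\lvert W\rvert^2+\tfrac1{12}\int_{M_\epsilon}\lvert E\rvert^2-\tfrac13\oint_{\partial M_\epsilon}S\,d\widetilde{s}\,.$$
For $\epsilon$ small, $\chi(M_\epsilon)=\chi(M)$. The integrals of $H^4$, $H^2\lambda$, $\lvert\mathring{B}\rvert^4$, $\lvert W\rvert^2$ and $\lvert E\rvert^2$ converge as $\epsilon\to0^+$. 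To see this, we repeat the convergence argument of Theorem \ref{t3}, now in the compactified metric $r^2\widetilde{g}$ with $r$ a special defining function: the hyperbolic-type quantities are $\mathcal{O}(r^4)$ by the analogue of \eqref{HRhyp-euc} and \eqref{Ricci}, while $d\widetilde{A}=r^{-4}d\overline{A}$. For $\lvert W\rvert^2$ we instead use conformal invariance, $\lvert W\rvert^2d\widetilde{A}=\lvert\overline{W}\rvert^2d\overline{A}$, which is finite. So these integrals contribute their full values to the finite part.

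The divergent term $-\frac16\int_{M_\epsilon}\lvert\mathring{B}\rvert^2$ is handled by Lemma \ref{laplacian}, adapted from \cite{T}. Under $H=\mathcal{O}(r^2)$, the constant term of $\int_{M_\epsilon}2\lvert\mathring{B}\rvert^2$ equals that of the convergent integral $\int_{M_\epsilon}(2\lvert\mathring{B}\rvert^2+\Delta\lvert\mathring{B}\rvert^2)$. This produces the last term of \eqref{RA5}.

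The main obstacle is the boundary term $\oint_{\partial M_\epsilon}S\,d\widetilde{s}$: we must show its expansion in $\epsilon$ has no constant term. In the hyperbolic case, Lemma \ref{Claim} handles the hyperbolic $S$ and orthogonality kills $\overline{S}$. Here there is no Euclidean comparison, so we work with the hyperbolic-type $S$ directly. First we express $S$ via \eqref{Scurvature} in terms of the compactified data using the conformal change $\widetilde{g}=r^{-2}\overline{g}$. Then we expand the induced metric on $\partial M_\epsilon$ and its second fundamental form $L$ in powers of $\epsilon$. The aim is to show, as in Lemma \ref{Claim}, that only odd powers (or $\epsilon^{-3},\epsilon^{-1}$ and positive powers) appear. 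We expect this to follow from the evenness of the special defining function expansion of a Poincar\'e-Einstein metric up to order $2n$, together with $M$ meeting $\partial X^5$ orthogonally, which makes $\partial M$ totally geodesic in $(M,\overline{g})$. Granting this, taking finite parts gives \eqref{RA5}.
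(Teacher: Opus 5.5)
Your reduction follows the paper's proof. You start from \eqref{expand} with $n=2$, complete the square to produce $\frac{1}{144}\lvert\mathring{B}\rvert^4$, rewrite $-2\lambda$ via \eqref{extrinsic-intrinsic-general} and \eqref{ChenB}, apply \eqref{CGB} to $\lambda^2$, and use Lemma \ref{laplacian} for the divergent $\frac16\int\lvert\mathring{B}\rvert^2$. Substituting $R=\lambda+1$ early only makes the two $2\int H^2$ terms cancel at once instead of being recombined at the end.

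\textbf{The gap} is the step you flag yourself: you never show that $\oint_{\partial M_\epsilon}S\,d\widetilde{s}$ has no $\epsilon^0$ term; you only grant it. The paper closes this step by citing Lemma \ref{Claim}. That lemma is stated for hypersurfaces of a $5$-dimensional Poincar\'e--Einstein space, and it concerns exactly the $S$-curvature of the metric induced by $\widetilde{g}$. No Euclidean comparison is involved: in Theorem \ref{t3} it was likewise applied to the hyperbolic $S$, and orthogonality was used only for $\overline{S}$. You misread it as a hyperbolic-only tool, and invoking it is the intended argument.

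\textbf{Your sketched mechanism would fail.} Work in $\mathbb{H}^5$, locally and periodically in $(x_1,x_2,x_3)$, with $M=\{x_4=u(z)\}$ and $u=u_2z^2+u_3z^3+\mathcal{O}(z^4)$.
\begin{itemize}
\item $M$ meets $\{z=0\}$ orthogonally, and the ambient expansion is trivially even.
\item The induced metric is the conformally flat warped product $ds^2+\phi^2\lvert dx\rvert^2$, with $\phi=z^{-1}$ and $ds=z^{-1}\sqrt{1+u'^2}\,dz$.
\item For this metric $2\sigma_2(P)\,d\widetilde{A}=\frac{d}{ds}(\phi'^3)\,ds\,dx$. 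Then \eqref{CGB0}, or directly \eqref{Scurvature} with the flat umbilic slices $L=(1+u'^2)^{-1/2}$ times the metric, gives per unit $x$-volume
\[
\oint_{\partial M_\epsilon}S\,d\widetilde{s}=-\epsilon^{-3}\bigl(1+u'(\epsilon)^2\bigr)^{-3/2}=-\epsilon^{-3}+6u_2^2\,\epsilon^{-1}+18\,u_2u_3+\mathcal{O}(\epsilon).
\]
\end{itemize}
So orthogonality plus evenness of $\widetilde{g}$ still leaves a constant term.

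What kills it here is asymptotic minimality. In this model $H=-\frac32u_2z-\frac32u_3z^2+\mathcal{O}(z^3)$. Hence $H=\mathcal{O}(z^2)$ forces $u_2=0$, which is \eqref{traceII} with ${\rm II}=0$, while $u_3$ stays free. In general, $H=\mathcal{O}(r^2)$ does not constrain $\partial_r^3u$ at $r=0$. The expansion is therefore not even, and no parity argument can work. The vanishing of the $\epsilon^0$ coefficient must come from a cancellation that uses \eqref{traceII}.

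The same example shows that the right-angle hypothesis in the statement of Lemma \ref{Claim} is not sufficient by itself. If you cite that lemma, asymptotic minimality has to enter its proof.
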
 
\textit{Proof.} Let $M$ be a hypersurface in $\widetilde{N}^5$ satisfying the conditions of the statement and denote by $M_\epsilon=M\cap\{r\geq \epsilon\}$, $\epsilon>0$ sufficiently small, the truncated hypersurfaces. Since $n=2$ holds, \eqref{expand} reads
	$$\mathcal{A}(M_\epsilon)=\int_{M_\epsilon} R^2\,d\widetilde{A}-2\int_{M_\epsilon}\lambda\,d\widetilde{A}-\int_{M_\epsilon}\lambda^2\,d\widetilde{A}\,.$$
For the first integral, we add and subtract the necessary terms until the integral of Chen's conformal invariant quantity appears. That is,
\begin{eqnarray*}
	\int_{M_\epsilon}R^2\,d\widetilde{A}&=&\int_{M_\epsilon} \left(H^2-R\right)^2\,d\widetilde{A}-\int_{M_\epsilon} H^4\,d\widetilde{A}+2\int_{M_\epsilon} H^2R\,d\widetilde{A}\\
	&=&\frac{1}{144}\int_{M_\epsilon} \lvert\mathring{B}\rvert^4\,d\widetilde{A}-\widetilde{\mathcal{B}}(M_\epsilon)+2\int_{M_\epsilon} H^2R\,d\widetilde{A}\,,
\end{eqnarray*}
where we have employed the relation \eqref{ChenB} in the second equality.

Now, we rewrite the second integral in the expression of the area, employing \eqref{extrinsic-intrinsic-general} and \eqref{ChenB}, as
\begin{eqnarray*}
	-2\int_{M_\epsilon}\lambda\,d\widetilde{A}&=&-2\int_{M_\epsilon} R\,d\widetilde{A}+2\mathcal{A}(M_\epsilon)=-2\int_{M_\epsilon}H^2\,d\widetilde{A}+\frac{1}{6}\int_{M_\epsilon}\lvert\mathring{B}\rvert^2\,d\widetilde{A}+2\mathcal{A}(M_\epsilon)\\
	&=&-2\int_{M_\epsilon}H^2\,d\widetilde{A}+\frac{1}{12}\int_{M_\epsilon}\left(2\lvert\mathring{B}\rvert^2+\Delta \lvert\mathring{B}\rvert^2\right)d\widetilde{A}+2\mathcal{A}(M_\epsilon)\,.
\end{eqnarray*}
The third equality follows in analogy with the proof of Theorem \ref{t3} (more precisely, see Lemma \ref{laplacian} below).

We next employ the Chern-Gauss-Bonnet formula \eqref{CGB} to obtain
$$-\int_{M_\epsilon}\lambda^2\,d\widetilde{A}=-\frac{4}{3}\pi^2\chi(M_\epsilon)+\frac{1}{24}\int_{M_\epsilon}\lvert W\rvert^2\,d\widetilde{A}-\frac{1}{12}\int_{M_\epsilon}\lvert E\rvert^2\,d\widetilde{A}+\frac{1}{3}\oint_{\partial M_\epsilon}S\,d\widetilde{s}\,.$$
Combining everything and rearranging, we get
\begin{eqnarray*}
	\mathcal{A}(M_\epsilon)&=&\frac{4}{3}\pi^2\chi(M_\epsilon)-\frac{1}{144}\int_{M_\epsilon}\lvert\mathring{B}\rvert^4\,d\widetilde{A}-\frac{1}{24}\int_{M_\epsilon}\lvert W\rvert^2\,d\widetilde{A}+\frac{1}{12}\int_{M_\epsilon}\lvert E\rvert^2\,d\widetilde{A}\\
	&&+\widetilde{\mathcal{B}}(M_\epsilon)-2\int_{M_\epsilon}H^2R\,d\widetilde{A}+2\int_{M_\epsilon}H^2\,d\widetilde{A}-\frac{1}{12}\int_{M_\epsilon}\left(2\lvert\mathring{B}\rvert^2+\Delta\lvert\mathring{B}\rvert^2\right)d\widetilde{A}\\
	&&-\frac{1}{3}\oint_{\partial M_\epsilon}S\,d\widetilde{s}\,.
\end{eqnarray*}
Observe that we can combine the second and third integrals in the second line employing \eqref{extrinsic-intrinsic-general} as
$$-2\int_{M_\epsilon} H^2R\,d\widetilde{A}+2\int_{M_\epsilon}H^2\,d\widetilde{A}=-2\int_{M_\epsilon}H^2(R-1)\,d\widetilde{A}=-2\int_{M_\epsilon}H^2\lambda\,d\widetilde{A}\,.$$
As explained in the proof of Theorem \ref{t3} (see also Lemma \ref{Claim}), the boundary integral in the above formula will not contribute to the renormalized area. This finishes the proof. \hfill$\square$

\section{Submanifolds of Arbitrary Codimension}

In this final section we will briefly discuss the extensions of our results for even-dimensional submanifolds of arbitrary codimension in Poincar\'e-Einstein spaces. We will assume that $M$ is a $2n$-dimensional submanifold of a Poincar\'e-Einstein $m$-dimensional space $(\widetilde{N}^m,\widetilde{g}\,)$, $m\geq 2n+1$. Let $\xi_1,...,\xi_{m-2n}$ be $m-2n$ mutually orthogonal unit normal vectors to $M$ with respect to $\widetilde{g}$. For each $\alpha=1,...,m-2n$, we define $H_\alpha={\rm Trace}B_\alpha/(2n)$, where $B_\alpha$ represents the second fundamental form associated to the unit normal $\xi_\alpha$. This gives us the mean curvature vector
$$H_T=\sum_{\alpha=1}^{m-2n} H_\alpha\,\xi_\alpha\,.$$
The extrinsic scalar curvature $R_T$ of $M$ can be considered to be
\begin{equation}\label{RT}
	R_T=\lvert H_T\rvert^2-\frac{1}{2n(2n-1)}\sum_{\alpha=1}^{m-2n}\lvert\mathring{B}_\alpha\rvert^2=\lvert H_T\rvert^2-\frac{1}{2n(2n-1)}\lvert\mathring{B}_T\rvert^2\,,
\end{equation}
where, for simplicity, we are denoting by $\lvert \mathring{B}_T\rvert^2$ the square of the total trace-free second fundamental form, that is, the sum of the squares of all the trace-free second fundamental forms $\mathring{B}_\alpha$, $\alpha=1,...,m-2n$. The expression \eqref{RT} follows from the definition of the extrinsic scalar curvature $R_T$ of $M$ given in (2.14) of \cite{C}, after noticing that for each $\alpha=1,...,m-2n$ fixed, \eqref{ChenB} holds (of course, here we must understand \eqref{ChenB} with the subindex $\alpha$ for each operator).

Finally, we observe that the Gauss' equation in combination with a computation analogue to that of \eqref{mu} yields
$$R_T=\lambda-\widetilde{W}_*+1\,,$$
where $\lambda$ is the intrinsic scalar curvature and $\widetilde{W}_*$ is the average of the components of the Weyl tensor as introduced in \eqref{W*}. We point out here that when the codimension of $M$ is greater than $1$, $\widetilde{W}_*$ may not be zero. Consequently, the analogue of \eqref{expand} now reads
\begin{equation}\label{expandhigh}
	\mathcal{A}(M_\epsilon)=\int_{M_\epsilon}R_T^n\,d\widetilde{A}-\sum_{i=1}^n\begin{pmatrix} n \\ i \end{pmatrix} \int_{M_\epsilon} \left(\lambda-\widetilde{W}_*\right)^i\,d\widetilde{A}\,.
\end{equation}

We are now in a position to prove the first results. The following is the extension of Proposition \ref{last1} to arbitrary codimension (c.f., Proposition 3.1 of \cite{AM}).

\begin{proposition}\label{last3} Let $M$ be a surface in a Poincar\'e-Einstein space $(\widetilde{N}^m,\widetilde{g}\,)$ and assume that $M$ approaches the asymptotic boundary at a right angle. Then, the renormalized area of $M$ satisfies
	$$\mathcal{A}_R(M)=-2\pi\chi(M)-\frac{1}{2}\int_M\lvert \mathring{B}_T\rvert^2\,d\widetilde{A}+\int_M \lvert H_T\rvert^2\,d\widetilde{A}+\int_M \widetilde{W}_{12}\,d\widetilde{A}\,,$$
where $\chi(M)$ is the Euler characteristic of $M$, $\lvert\mathring{B}_T\rvert^2$ is the square of the total trace-free second fundamental form of $M$ in $\widetilde{N}^m$, $\lvert H_T\rvert^2=\widetilde{g}(H_T,H_T)$ is the magnitude square of the mean curvature vector, and $\widetilde{W}_{12}$ is the Weyl tensor of $\widetilde{N}^m$ evaluated at any orthonormal frame for $TM$.
\end{proposition}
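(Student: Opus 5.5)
Specialize \eqref{expandhigh} to $n=1$ and follow the proof of Proposition \ref{last1}. For a surface, $n=1$ gives
$$\mathcal{A}(M_\epsilon)=\int_{M_\epsilon}R_T\,d\widetilde{A}-\int_{M_\epsilon}\left(\lambda-\widetilde{W}_*\right)d\widetilde{A}\,.$$
The first step is to identify $\widetilde{W}_*$ when $2n=2$. By \eqref{W*} it is $\frac{1}{2}(\widetilde{W}_{12}+\widetilde{W}_{21})=\widetilde{W}_{12}$, using $\widetilde{W}_{ii}=0$ and $\widetilde{W}_{12}=\widetilde{W}_{21}$. This quantity is $\widetilde{W}(e_1,e_2,e_1,e_2)$. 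Since $TM$ is two-dimensional, any other orthonormal frame of $TM$ differs by a rotation or reflection, and this expression is invariant under both. So $\widetilde{W}_{12}$ is well defined, as the statement asserts. The second term therefore splits as $-\int_{M_\epsilon}\lambda\,d\widetilde{A}+\int_{M_\epsilon}\widetilde{W}_{12}\,d\widetilde{A}$.

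Next, substitute \eqref{RT} with $n=1$, namely $R_T=\lvert H_T\rvert^2-\frac{1}{2}\lvert\mathring{B}_T\rvert^2$. For the $\lambda$-term, recall that $\lambda=K$ is the Gaussian curvature of the induced metric, and apply the Gauss--Bonnet formula \eqref{GB} on $M_\epsilon$. Altogether this gives
$$\mathcal{A}(M_\epsilon)=-2\pi\chi(M_\epsilon)-\frac12\int_{M_\epsilon}\lvert\mathring{B}_T\rvert^2\,d\widetilde{A}+\int_{M_\epsilon}\lvert H_T\rvert^2\,d\widetilde{A}+\int_{M_\epsilon}\widetilde{W}_{12}\,d\widetilde{A}+\oint_{\partial M_\epsilon}\kappa_g\,d\widetilde{s}\,.$$
For small $\epsilon$ we have $\chi(M_\epsilon)=\chi(M)$, so it remains to take the constant term in $\epsilon$.

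The boundary term is handled as in Proposition \ref{last1}. By (3.11) of \cite{AM}, which holds in arbitrary codimension for surfaces meeting the boundary orthogonally, the total geodesic curvature $\oint_{\partial M_\epsilon}\kappa_g\,d\widetilde{s}$ has an expansion with no constant term.

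Then I would check that the three interior integrals converge as $\epsilon\to0^+$, so that their finite parts are the full integrals over $M$.
\begin{itemize}
\item For $\lvert\mathring{B}_T\rvert^2\,d\widetilde{A}$ this follows from the pointwise conformal invariance of Chen's quantity, $\lvert\mathring{B}_\alpha\rvert^2 d\widetilde{A}=\lvert\overline{\mathring{B}}_\alpha\rvert^2 d\overline{A}$ in dimension $2$, with respect to the compactified metric $r^2\widetilde{g}$.
\item For $\lvert H_T\rvert^2$, the right-angle condition gives $H_\alpha=\mathcal{O}(r)$ componentwise, as in the proof of Theorem \ref{t2}, and $d\widetilde{A}=r^{-2}d\overline{A}$.
\item For the Weyl term, $\widetilde{W}(e_1,e_2,e_1,e_2)$ in a $\widetilde{g}$-orthonormal frame equals $r^{-2}$ times the component of the Weyl tensor of $r^2\widetilde{g}$ in an $r^2\widetilde{g}$-orthonormal frame. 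Hence $\widetilde{W}_{12}\,d\widetilde{A}=\overline{W}_{12}\,d\overline{A}$, which is integrable on the compact $M$.
\end{itemize}

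The main obstacle I expect is the validity of (3.11) of \cite{AM} in higher codimension. If it is not available in that generality, I would instead verify it directly. Near $\partial M$, write $\kappa_g$ via the conformal change, $\kappa_g=r\overline{\kappa}_g+\partial_{\overline{\nu}} r$, and $d\widetilde{s}=r^{-1}d\overline{s}$. On $\{r=\epsilon\}$ this gives $\oint\kappa_g\,d\widetilde{s}=\oint\overline{\kappa}_g\,d\overline{s}+\epsilon^{-1}\oint\partial_{\overline{\nu}}r\,d\overline{s}$. Orthogonality and the special defining function then force $\partial_{\overline{\nu}}r=1+\mathcal{O}(\epsilon^2)$, and evenness considerations kill the constant term.
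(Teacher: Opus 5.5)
Your proposal is correct and follows the paper's route. You specialize \eqref{expandhigh} to $n=1$, use $\widetilde{W}_*=\widetilde{W}_{12}$, substitute \eqref{RT} and apply Gauss--Bonnet as in Proposition \ref{last1}, then drop the geodesic-curvature boundary term via (3.11) of \cite{AM}; your convergence checks are more thorough than the paper's. One small slip: with $\overline{g}=r^2\widetilde{g}$, the orthonormal-frame components satisfy $\widetilde{W}_{12}=r^{2}\,\overline{W}_{12}$, not $r^{-2}\,\overline{W}_{12}$, and it is this factor together with $d\widetilde{A}=r^{-2}\,d\overline{A}$ that gives the identity $\widetilde{W}_{12}\,d\widetilde{A}=\overline{W}_{12}\,d\overline{A}$ you correctly conclude.
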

\textit{Proof.} The proof is analogous to that of Proposition \ref{last1}. However, since $\widetilde{W}_*=\widetilde{W}_{12}$ may now be different from zero, we need to keep that term, which comes from \eqref{expandhigh}. The convergence of the last integral follows from the pointwise transformation of the Weyl tensor for a conformal change of metric. \hfill$\square$
\\

Our next result will extend Theorem \ref{t2}.

\begin{proposition} Let $M$ be a surface in $\mathbb{H}^m$ approaching the ideal boundary $\partial_\infty\mathbb{H}^m$ at a right angle. Then, the renormalized area of $M$ is given by
	$$\mathcal{A}_R(M)=\int_M \lvert H_T\rvert^2\,dA-\int_M \lvert \overline{H}_T\rvert^2\,d\overline{A}\,,$$
	where $H_T$ is the mean curvature vector of $M$ as a surface in $\mathbb{H}^m$, $dA$ is the area element on $M$ induced from the hyperbolic metric, while $\overline{H}_T$ and $d\overline{A}$ are the analogues when regarding (after a conformal change of metric) $M$ as a surface in the Euclidean space $\mathbb{R}^m$.
\end{proposition}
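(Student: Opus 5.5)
The plan is to mimic the proof of Theorem \ref{t2}, replacing the hypersurface quantities by their codimension-$m-2$ analogues. First I need the conformal invariance of Chen's quantity in arbitrary codimension, $\left(\lvert H_T\rvert^2-R_T\right)g$. By \eqref{RT} this quantity equals $\frac{1}{2}\lvert\mathring{B}_T\rvert^2 g$ (taking $n=1$), and Theorem 1 of \cite{C} applies in any codimension. Next I need the Gauss equation in the two space forms. Since $\mathbb{H}^m$ and $\mathbb{R}^m$ are conformally flat, $\widetilde{W}_*=0$ in both, so $R_T=\lambda+1=K+1$ in $\mathbb{H}^m$ and $\overline{R}_T=\overline{K}$ in $\mathbb{R}^m$. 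Together these give, pointwise,
$$\left(\lvert H_T\rvert^2-K-1\right)dA=\left(\lvert \overline{H}_T\rvert^2-\overline{K}\right)d\overline{A}\,.$$

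Integrate this identity over the truncation $M_\epsilon=M\cap\{z\geq\epsilon\}$ and rearrange:
$$\mathcal{A}(M_\epsilon)=\int_{M_\epsilon}\lvert H_T\rvert^2\,dA-\int_{M_\epsilon}\lvert\overline{H}_T\rvert^2\,d\overline{A}+\int_{M_\epsilon}\overline{K}\,d\overline{A}-\int_{M_\epsilon}K\,dA\,.$$
Apply the Gauss--Bonnet formula \eqref{GB} to both curvature integrals. The Euler characteristic terms cancel because the topology of $M_\epsilon$ does not depend on the metric. What remains is
$$\mathcal{A}(M_\epsilon)-\oint_{\partial M_\epsilon}\kappa_g\,ds=\int_{M_\epsilon}\lvert H_T\rvert^2\,dA-\int_{M_\epsilon}\lvert\overline{H}_T\rvert^2\,d\overline{A}-\oint_{\partial M_\epsilon}\overline{\kappa}_g\,d\overline{s}\,.$$

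It then remains to take constant terms as $\epsilon\to0^+$.

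\emph{Hyperbolic boundary term.} The total hyperbolic geodesic curvature has no constant term, by (3.11) of \cite{AM}. That computation is intrinsic to $M$: it uses the conformal relation $\kappa_g=z\overline{\kappa}_g+\overline{g}(\overline{\nabla}z,\overline{\nu})$ along the level curve $\{z=\epsilon\}$ of $M$ and the expansion of $z$ near $\partial M$. So it carries over verbatim to higher codimension, as already used in Proposition \ref{last3}.

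\emph{Euclidean boundary term.} I expect this to be the main point to check. The Euclidean total geodesic curvature should be $\mathcal{O}(\epsilon)$: since $M$ meets $\{z=0\}$ orthogonally, $\partial M$ is a geodesic of $(M,\overline{g})$. Indeed, the acceleration of $\partial M$ in $\mathbb{R}^m$ lies in $\{z=0\}$ and is normal to $\partial M$ within that hyperplane, while the tangent space of $M$ at $\partial M$ is $T\partial M\oplus\partial_z$. Hence its tangential component vanishes, and by smoothness $\overline{\kappa}_g=\mathcal{O}(\epsilon)$ on $\partial M_\epsilon$.

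\emph{Convergence of the bending integrals.} Both integrals converge. The Euclidean one clearly does. For the hyperbolic one, decompose into unit normals: each $H_\alpha=z\overline{H}_\alpha+\overline{g}(\overline{\xi}_\alpha,\partial_z)$, and orthogonality at the boundary gives $\overline{g}(\overline{\xi}_\alpha,\partial_z)=\mathcal{O}(z)$. Hence $\lvert H_T\rvert^2=\mathcal{O}(z^2)$, which is integrable against $dA=z^{-2}d\overline{A}$.

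Taking finite parts of the displayed identity then yields the stated formula.
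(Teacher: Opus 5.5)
Your proof is correct and is essentially the paper's argument. It is the proof of Theorem \ref{t2} carried over to arbitrary codimension, and the codimension-dependent inputs you use all hold: $R_T=K+1$, $\overline{R}_T=\overline{K}$, Chen's invariance, and $H_\alpha=z\overline{H}_\alpha+\overline{g}(\overline{\xi}_\alpha,\partial_z)$. The boundary behaviour also carries over, since both boundary terms only need $\partial_z$ to be tangent to $M$ along $\partial M$.

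The paper orders the same ingredients differently. It specializes Proposition \ref{last3} with $\widetilde{W}_{12}=0$, then moves $\frac{1}{2}\int_M\lvert\mathring{B}_T\rvert^2\,dA$ to the Euclidean side using Chen's theorem and Gauss--Bonnet on all of $(M,\overline{g})$. There $\partial M$ is a geodesic, so the boundary term vanishes exactly rather than being $\mathcal{O}(\epsilon)$.
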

\textit{Proof.} Let $M$ be a surface in $\mathbb{H}^m$. From Proposition \ref{last3}, it follows that
$$\mathcal{A}_R(M)=-2\pi\chi(M)-\frac{1}{2}\int_M \lvert\mathring{B}_T\rvert^2\,dA+\int_M \lvert H_T\rvert^2\,dA\,,$$
since $\widetilde{W}_{12}\equiv 0$ for the hyperbolic metric.

We now deduce from \eqref{RT} and Theorem 1 of \cite{C} that
$$\frac{1}{2}\int_M \lvert\mathring{B}_T\rvert^2\,dA=\int_M \left(\lvert H_T\rvert^2-R_T\right)dA=\int_M \left(\lvert \overline{H}_T\rvert^2-\overline{R}_T\right)d\overline{A}\,.$$
Observe that the (Euclidean) extrinsic scalar curvature $\overline{R}_T=\overline{\lambda}=\overline{K}$ is the Gaussian curvature and so we can use the classical Gauss-Bonnet formula \eqref{GB}.

Substituting everything in the formula for $\mathcal{A}_R$ and employing the orthogonality condition to get rid of the boundary integral, we prove the result. \hfill$\square$
\\

We now consider the analogue results for $4$-dimensional submanifolds. The following propositions will extend Proposition \ref{last2} and Theorem \ref{t3}, respectively.

\begin{proposition}\label{lastnew} Let $M$ be an asymptotically minimal (of order $2$) $4$-dimensional submanifold in a Poincar\'e-Einstein space $\widetilde{N}^m$ and assume $\widetilde{W}_*=\mathcal{O}(r^4)$. Then, the renormalized area of $M$ satisfies
	\begin{eqnarray*}
		\mathcal{A}_R(M)&=&\frac{4}{3}\pi^2\chi(M)-\frac{1}{144}\int_M\lvert \mathring{B}_T\rvert^4\,d\widetilde{A}-\frac{1}{24}\int_M\lvert W\rvert^2\,d\widetilde{A}+\frac{1}{12}\int_M\lvert E\rvert^2\,d\widetilde{A}\\
		&&+\int_M\lvert H_T\rvert^4\,d\widetilde{A}-2\int_M\lvert H_T\rvert^2\lambda\,d\widetilde{A}-\frac{1}{12}\int_M \left(2\lvert\mathring{B}_T\rvert^2+\Delta \lvert\mathring{B}_T\rvert^2\right)d\widetilde{A}\\
		&&+\int_M \widetilde{W}_*^2\,d\widetilde{A}-2\int_M \lambda \widetilde{W}_*\,d\widetilde{A}+2\int_M \lvert H_T\rvert^2 \widetilde{W}_*\,d\widetilde{A}\,,
	\end{eqnarray*}
	where $\chi(M)$ is the Euler characteristic, $\mathring{B}_T$ is the total trace-free second fundamental form, $W$ is the Weyl tensor of $M$, $\widetilde{W}_*$ is the average of the components of the Weyl tensor of $\widetilde{N}^m$ defined in \eqref{W*}, $\lvert E\rvert^2$ is the square of the trace-free Ricci tensor, $H_T$ is the mean curvature vector, and $\lambda$ is the scalar curvature of $M$ in $\widetilde{N}^m$. 
\end{proposition}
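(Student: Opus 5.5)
The plan is to follow the proof of Proposition \ref{last2} line by line, starting from \eqref{expandhigh} with $n=2$ instead of \eqref{expand}. For the truncations $M_\epsilon=M\cap\{r\geq\epsilon\}$ this gives
$$\mathcal{A}(M_\epsilon)=\int_{M_\epsilon}R_T^2\,d\widetilde{A}-2\int_{M_\epsilon}(\lambda-\widetilde{W}_*)\,d\widetilde{A}-\int_{M_\epsilon}(\lambda-\widetilde{W}_*)^2\,d\widetilde{A}\,.$$
Expanding the last square produces $-\int\lambda^2+2\int\lambda\widetilde{W}_*-\int\widetilde{W}_*^2$. The first integral is handled by completing Chen's quantity:
$$R_T^2=(\lvert H_T\rvert^2-R_T)^2-\lvert H_T\rvert^4+2\lvert H_T\rvert^2R_T\,.$$
By \eqref{RT}, $(\lvert H_T\rvert^2-R_T)^2=\frac{1}{144}\lvert\mathring{B}_T\rvert^4$. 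In $2\lvert H_T\rvert^2R_T$ we substitute $R_T=\lambda-\widetilde{W}_*+1$.

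For the linear term, we again use $\lambda-\widetilde{W}_*=R_T-1$ and \eqref{RT}. This gives
$$-2\int(\lambda-\widetilde{W}_*)=2\mathcal{A}(M_\epsilon)-2\int\lvert H_T\rvert^2+\frac16\int\lvert\mathring{B}_T\rvert^2\,,$$
so the $2\mathcal{A}(M_\epsilon)$ moves to the left-hand side and flips the overall sign, exactly as in Proposition \ref{last2}. The term $\int\lambda^2$ is rewritten via the Chern--Gauss--Bonnet formula \eqref{CGB}, which is intrinsic to $M$ and so unaffected by the codimension. Collecting terms, the $2\int\lvert H_T\rvert^2$ pieces combine with $-2\int\lvert H_T\rvert^2(R_T)$ into
$$-2\int\lvert H_T\rvert^2\lambda+2\int\lvert H_T\rvert^2\widetilde{W}_*\,.$$
After the sign flip, the $\widetilde{W}_*$ terms from the quadratic piece become $+\int\widetilde{W}_*^2-2\int\lambda\widetilde{W}_*$. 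Together these give all the terms in the statement, plus a boundary term $-\frac13\oint_{\partial M_\epsilon}S$ and the term $\frac16\int\lvert\mathring{B}_T\rvert^2$. The latter is replaced by $\frac1{12}\int(2\lvert\mathring{B}_T\rvert^2+\Delta\lvert\mathring{B}_T\rvert^2)$ using the codimension-general version of Lemma \ref{laplacian}.

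The remaining work is analytic. We must show the boundary integral of $S$ has no constant term, which follows as in Lemma \ref{Claim} since that argument only uses the intrinsic geometry of $\partial M_\epsilon\subset M$ near the boundary. We must also show that every integral in the final formula converges, so that finite parts are honest limits. The new integrals are those involving $\widetilde{W}_*$, and this is where the hypothesis $\widetilde{W}_*=\mathcal{O}(r^4)$ enters. Since $d\widetilde{A}=r^{-4}d\overline{A}$, the integral $\int\widetilde{W}_*^2\,d\widetilde{A}$ certainly converges. For $\int\lambda\widetilde{W}_*$ and $\int\lvert H_T\rvert^2\widetilde{W}_*$ it suffices that $\lambda$ and $\lvert H_T\rvert^2$ are bounded near the boundary. $\lambda$ is bounded because it tends to $-1$, and $\lvert H_T\rvert^2=\mathcal{O}(r^4)$ by asymptotic minimality.

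The main obstacle I expect is the Laplacian step, i.e.\ extending Lemma \ref{laplacian} to $\lvert\mathring{B}_T\rvert^2$. That requires the asymptotic expansion of $\lvert\mathring{B}_T\rvert^2$ in $r$ for higher codimension, in particular that it is $\mathcal{O}(r^2)$ with the right leading structure. I would first try to reduce this to the hypersurface computation by applying it separately to each normal direction $\xi_\alpha$ and summing over $\alpha$.
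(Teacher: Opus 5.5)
Your proposal is correct and takes essentially the same route as the paper, which likewise reruns the proof of Proposition \ref{last2} starting from \eqref{expandhigh}, keeps the extra $\widetilde{W}_*$ terms, and checks that the new integrals converge; your algebra reproduces every term of the statement. The differences are minor: the paper gets convergence of $\int_M\widetilde{W}_*^2\,d\widetilde{A}$ and $\int_M\lvert H_T\rvert^2\widetilde{W}_*\,d\widetilde{A}$ from the conformal transformation of the Weyl tensor, so it needs $\widetilde{W}_*=\mathcal{O}(r^4)$ only for $\int_M\lambda\widetilde{W}_*\,d\widetilde{A}$, and it leaves the higher-codimension version of Lemma \ref{laplacian} (which you rightly flag as needing care) entirely implicit.
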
 
\textit{Proof.} The proof of the statement follows the same steps as that of Proposition \ref{last2}. However, in this case $\widetilde{W}_*$ defined in \eqref{W*} may not be zero and, hence, we need to preserve the corresponding integral terms arising from \eqref{expandhigh}. The convergence of the first and third integrals in the last line is a consequence of the pointwise transformation of the Weyl tensor, while
$$\int_M \lambda\widetilde{W}_*\,d\widetilde{A}<\infty\,,$$
follows from the assumption $\widetilde{W}_*=\mathcal{O}(r^4)$. \hfill$\square$

\begin{proposition} Let $M$ be an asymptotically minimal (of order $2$) $4$-dimensional submanifold in $\mathbb{H}^m$. Then, the renormalized area of $M$ is given by
	\begin{eqnarray*}
		\mathcal{A}_R(M)&=&\int_M \lvert H_T\rvert^4\,dA-\int_M \lvert\overline{H}_T\rvert^4\,d\overline{A}-2\left(\int_M \lvert H_T\rvert^2 \lambda\,dA-\int_M\lvert\overline{H}_T\rvert^2\overline{\lambda}\,d\overline{A}\right)\\
		&&+\frac{1}{12}\left(\int_M \lvert E\rvert^2\,dA-\int_M \lvert\overline{E}\rvert^2\,d\overline{A}\right)-\frac{1}{12}\int_M \left(2\lvert\mathring{B}_T\rvert^2+\Delta \lvert\mathring{B}_T\rvert^2\right)dA\,,
	\end{eqnarray*}
	where $H_T$ and $\lambda$ are the mean curvature vector and the scalar curvature, $\lvert E\rvert^2$ is the square of the trace-free Ricci tensor (the Euclidean analogues are denoted with an upper bar), and $\lvert\mathring{B}_T\rvert^2$ is the square of the total trace-free second fundamental form of $M$ in $\mathbb{H}^m$.
\end{proposition}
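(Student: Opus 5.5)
Start from Proposition \ref{lastnew} specialized to $\widetilde{N}^m=\mathbb{H}^m$. The ambient Weyl tensor vanishes identically, so $\widetilde{W}_*\equiv 0$; the hypothesis $\widetilde{W}_*=\mathcal{O}(r^4)$ holds trivially and the last line of that formula drops out. This gives
\begin{eqnarray*}
\mathcal{A}_R(M)&=&\frac{4}{3}\pi^2\chi(M)-\frac{1}{144}\int_M\lvert \mathring{B}_T\rvert^4\,dA-\frac{1}{24}\int_M\lvert W\rvert^2\,dA+\frac{1}{12}\int_M\lvert E\rvert^2\,dA\\
&&+\int_M\lvert H_T\rvert^4\,dA-2\int_M\lvert H_T\rvert^2\lambda\,dA-\frac{1}{12}\int_M \left(2\lvert\mathring{B}_T\rvert^2+\Delta \lvert\mathring{B}_T\rvert^2\right)dA\,.
\end{eqnarray*}
The task is then to reverse the argument of Corollary \ref{c2}: rewrite the topological and conformally invariant terms as Euclidean integrals.

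For the term $\frac{1}{144}\int_M\lvert\mathring{B}_T\rvert^4\,dA$, I would use \eqref{RT} with $2n(2n-1)=12$ to write it as $\int_M(\lvert H_T\rvert^2-R_T)^2\,dA$. Chen's Theorem 1 of \cite{C} in arbitrary codimension makes $(\lvert H_T\rvert^2-R_T)\,g$ conformally invariant, and $dA=z^{-4}d\overline{A}$, so this integral equals $\int_M(\lvert\overline{H}_T\rvert^2-\overline{R}_T)^2\,d\overline{A}$. In $\mathbb{R}^m$ the Gauss equation gives $\overline{R}_T=\overline{\lambda}$, since the ambient curvature and Weyl tensor vanish. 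Expanding, we get
$$\int_M\lvert\overline{H}_T\rvert^4\,d\overline{A}-2\int_M\lvert\overline{H}_T\rvert^2\overline{\lambda}\,d\overline{A}+\int_M\overline{\lambda}^2\,d\overline{A}\,.$$

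For the topological and Weyl terms, I would apply the Chern-Gauss-Bonnet formula \eqref{CGB} to the intrinsic Euclidean metric on $M$:
$$\frac{4}{3}\pi^2\chi(M)-\frac{1}{24}\int_M\lvert\overline{W}\rvert^2\,d\overline{A}=\int_M\overline{\lambda}^2\,d\overline{A}-\frac{1}{12}\int_M\lvert\overline{E}\rvert^2\,d\overline{A}+\frac{1}{3}\oint_{\partial M}\overline{S}\,d\overline{s}\,.$$
The conformal invariance $\lvert\overline{W}\rvert^2d\overline{A}=\lvert W\rvert^2dA$ lets me replace the hyperbolic Weyl integral by the Euclidean one. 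Substituting both expressions, the two $\int\overline{\lambda}^2$ terms cancel, and the remaining terms arrange into exactly the claimed formula.

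The main obstacle is showing that the boundary term $\oint_{\partial M}\overline{S}\,d\overline{s}$ vanishes in arbitrary codimension. In the hypersurface case we argued that orthogonality at $\{z=0\}$ makes $\partial M$ totally geodesic in $M$. Here I would argue as follows. Since the order-$2$ asymptotic minimality gives $\overline{\xi}_{\alpha,z}=\mathcal{O}(z)$ for each normal, the Euclidean unit vector $\partial_z$ is tangent to $M$ along $\partial M$ and is the inward normal $\nu$. Because $\{z=0\}$ is flat, the Gauss formula gives $L(X,Y)=\overline{g}(\overline{\nabla}_X Y,\nu)$, which equals the $z$-component of $D_XY$ and is zero for $X,Y$ tangent to $\{z=0\}$. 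Hence $L\equiv 0$, and $\overline{S}$, being a sum of terms each containing $L$ or $h$, vanishes by \eqref{Scurvature}. Convergence of each integral I would check termwise, using the codimension version of \eqref{HRhyp-euc} and the $\lvert E\rvert^2$ identity analogous to \eqref{Ricci}, as in the proof of Theorem \ref{t3}.
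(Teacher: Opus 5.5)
Your proposal is correct and follows essentially the same route as the paper. You specialize Proposition~\ref{lastnew} with $\widetilde{W}_*\equiv 0$, rewrite $\frac{1}{144}\int_M\lvert\mathring{B}_T\rvert^4\,dA$ as a Euclidean integral via \eqref{RT} and Chen's conformal invariance, expand, apply \eqref{CGB} to $\int_M\overline{\lambda}^2\,d\overline{A}$ together with $\lvert\overline{W}\rvert^2d\overline{A}=\lvert W\rvert^2dA$, and handle the boundary term and convergence as in Theorem~\ref{t3}.

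Your explicit argument that $\partial M$ is totally geodesic in $(M,\overline{g})$ in arbitrary codimension, so that $\overline{S}=0$, usefully fills in what the paper leaves to ``arguing as in Theorem~\ref{t3}''. One wording fix there: the property of $\{z=0\}$ you actually use is that it is totally geodesic in $\mathbb{R}^m$, equivalently that $Y^z\equiv 0$ along $\partial M$. Intrinsic flatness of $\{z=0\}$ is not what makes the $z$-component of $D_XY$ vanish.
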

\textit{Proof.} Apply Proposition \ref{lastnew} to the asymptotically minimal $4$-dimensional submanifold $M$ in $\mathbb{H}^m$ to get
\begin{eqnarray*}
	\mathcal{A}_R(M)&=&\frac{4}{3}\pi^2\chi(M)-\frac{1}{144}\int_M\lvert \mathring{B}_T\rvert^4\,dA-\frac{1}{24}\int_M\lvert W\rvert^2\,dA+\frac{1}{12}\int_M\lvert E\rvert^2\,dA\\
	&&+\int_M\lvert H_T\rvert^4\,dA-2\int_M\lvert H_T\rvert^2\lambda\,dA-\frac{1}{12}\int_M \left(2\lvert\mathring{B}_T\rvert^2+\Delta \lvert\mathring{B}_T\rvert^2\right)dA\,,
\end{eqnarray*}
since $\widetilde{W}_*=0$ holds when the ambient space is $\mathbb{H}^m$. Then, from \eqref{RT} and Theorem 1 of \cite{C}, we have
$$\frac{1}{144}\int_M \lvert\mathring{B}_T\rvert^4\,dA=\int_M\left(\lvert H_T\rvert^2-R_T\right)^2\,dA=\int_M \left(\lvert\overline{H}_T\rvert^2-\overline{R}_T\right)^2\,d\overline{A}\,.$$
Expanding this, applying the Chern-Gauss-Bonnet formula \eqref{CGB} to the integral of $\overline{R}_T^2=\overline{\lambda}^2$, and arguing as in Theorem \ref{t3} we conclude with the statement. \hfill$\square$

\section{Appendix: Technical Lemmas}

In this appendix, we will prove two technical lemmas employed in the proofs of Theorem \ref{t3} and Proposition \ref{last2}.

\begin{lemma}\label{Claim} Let $M$ be a hypersurface in a $5$-dimensional Poincar\'e-Einstein space $\widetilde{N}^5$ meeting the asymptotic boundary at a right angle and denote by $M_\epsilon=M\cap\{r\geq \epsilon\}$, $\epsilon>0$, the truncated hypersurface. Then, the expansion of
	$$\oint_{\partial M_\epsilon} S\,d\widetilde{s}\,,$$
in terms of $\epsilon>0$ has no constant term. (Recall that $S$ is defined in \eqref{Scurvature}.)
\end{lemma}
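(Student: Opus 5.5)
We plan to show that every term of $S$, once integrated against $d\widetilde{s}$ over $\partial M_\epsilon$, has an expansion in $\epsilon$ made only of odd powers $\epsilon^{-3},\epsilon^{-1},\epsilon,\dots$ (plus $o(1)$ terms). In that case no constant term can appear. The analogous statement for $n=1$ is (3.11) of \cite{AM}, where the total geodesic curvature of $\partial M_\epsilon$ behaves like $\epsilon^{-1}\times(\text{length})$ plus $\mathcal{O}(\epsilon)$; our argument adapts that one to the $4$-dimensional setting.

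\textbf{Compactified metric.} We work with $\overline{g}=r^2\widetilde{g}$, where $r$ is a special defining function. Near $\partial X^5$ we write $\widetilde{g}=r^{-2}(dr^2+g_r)$, where $g_r$ is a family of metrics on the level sets. Since $\widetilde{g}$ is Poincar\'e-Einstein, the Fefferman-Graham expansion holds: $g_r$ is even in $r$ up to order $r^{4}$. The hypersurface $M$ meets $\{r=0\}$ orthogonally. Hence, in suitable coordinates near $\partial M$, $M$ is a graph over $\partial M\times[0,\delta)$ whose normal deviation is $\mathcal{O}(r^2)$. As in \cite{AM,T}, we expect the asymptotically minimal condition to force this deviation to be even up to the relevant order; this is where we would use that $H=\mathcal{O}(r^2)$, or at least $\overline{\xi}_r=\mathcal{O}(r)$.

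\textbf{Scaling of the terms.} On $\partial M_\epsilon=M\cap\{r=\epsilon\}$ we use the conformal transformation laws between $\widetilde{g}$ and $\overline{g}$:
\begin{itemize}
\item $d\widetilde{s}=\epsilon^{-3}d\overline{s}$;
\item $L=\epsilon^{-1}\overline{L}+\epsilon^{-2}\overline{g}(\overline{\nabla}r,\overline{\nu})\,\overline{g}|_{\partial M_\epsilon}$ (up to sign convention), where $\widetilde{\nu}=\epsilon\overline{\nu}$ and $\overline{\nu}$ is the inward $\overline{g}$-normal;
\item the curvature quantities $\lambda$, ${\rm Ric}(\nu,\nu)$ and ${\rm Rm}$ transform by the standard formulas involving $\overline{\nabla}^2 \log r$ and $|\overline{\nabla}r|^2$.
\end{itemize}
Since $\overline{g}(\overline{\nabla}r,\overline{\nu})=1+\mathcal{O}(\epsilon^2)$ (orthogonality together with evenness), we get $h=3\epsilon^{-1}\cdot(\ldots)$. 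Each term of \eqref{Scurvature} is cubic in "curvature weight". In the leading order the hyperbolic model contributes $S\sim c\,\epsilon^{3}$ (a constant times $\epsilon^3$) after inserting $h\approx 3$, $L\approx \widetilde{g}$, $\lambda\approx -1$. Multiplied by $d\widetilde{s}=\epsilon^{-3}d\overline{s}$, this gives $c\,\overline{\mathrm{vol}}(\partial M_\epsilon)\epsilon^{-3}$. Because $\partial M_\epsilon$ is a perturbation of $\partial M$ by even corrections, $\overline{\mathrm{vol}}(\partial M_\epsilon)=a_0+a_2\epsilon^2+\mathcal{O}(\epsilon^4)$, and the leading contribution is $a_0\epsilon^{-3}+a_2\epsilon^{-1}+\mathcal{O}(\epsilon)$. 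The subleading terms of $S$ (involving $\overline{L}$, which vanishes at $\epsilon=0$ because $\partial M$ is totally geodesic, and the $r^2$-coefficient of $g_r$) then produce $\epsilon^{-1}$ and $\epsilon^{1}$ pieces.

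\textbf{Main obstacle: the parity claim.} The key difficulty is to show that, after collecting everything, the integrand $S\,d\widetilde{s}$ equals $\epsilon^{-3}$ times a function on $\partial M\times[0,\delta)$ that is even in $\epsilon$ up to order $\epsilon^{3}$. Equivalently, no $\epsilon^3$ coefficient may survive in the $\overline{g}$-expansion of $\epsilon^{3}S$. We would first check this on the model case $\mathbb{H}^5$ with $M$ a vertical totally geodesic half-space or hemisphere, where $S$ is explicitly a function of $\epsilon$. Then we would argue in general from the evenness of $g_r$ (Poincar\'e-Einstein, odd dimension $5$) and of the defining data of $M$, in the spirit of Graham-Witten \cite{GW}: the odd coefficients of the expansion vanish up to the order where the renormalized area is defined. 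If the parity cannot be obtained cleanly, the fallback is a direct computation of the $\epsilon^3$ coefficient, using $\overline{L}=\mathcal{O}(\epsilon)$ and $\overline{\xi}_r=\mathcal{O}(\epsilon)$ together with the Gauss equation, to show that it integrates to zero.
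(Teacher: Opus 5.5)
\textbf{There is a genuine gap, and it cannot be filled: the lemma as stated is false.}

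Your whole argument hinges on the ``parity claim'', and that claim fails under every hypothesis available here. The evenness of $g_r$ is fine, but the defining data of $M$ are not even. Write $M$ near $\partial M$ as a graph $u=u_0+u_2r^2+u_3r^3+\cdots$. The right-angle condition only kills the linear term. Even $H=\mathcal{O}(r^2)$, which the lemma does not assume, only fixes $u_2$ through $3\overline{B}_{44}=\mathrm{Trace}\,\mathrm{II}$ and leaves $u_3$ free. Graham--Witten evenness up to order $r^5$ comes from the full minimal hypersurface equation, not from asymptotic minimality. Hence $\overline{g}(\overline{\nabla}r,\overline{\nu})$ and $\overline{\mathrm{vol}}(\partial M_\epsilon)$ both have $\epsilon^3$-terms. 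Products of $u_3$ with $u_2$ and with the curvature of $\partial M$ then survive in the $\epsilon^3$-coefficient of $\epsilon^3S$. Your fallback computation would find this coefficient, and it does not integrate to zero.

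\textbf{Counterexample.} In $\mathbb{H}^5$ take $M$ rotationally symmetric with profile $|x|=\rho(z)=\sqrt{1-z^2}+bz^3$ near $z=0$.
\begin{itemize}
\item $M$ meets $\{z=0\}$ orthogonally, and $H=-\frac32bz^2+\mathcal{O}(z^3)$, so it is even asymptotically minimal of order $2$.
\item The induced metric is the warped product $d\tau^2+\psi^2g_{S^3}$ with $\psi=\rho/z$, where $\tau$ increases towards $\partial M$.
\item For its level sets, $L=(\psi_\tau/\psi)\,\mathrm{Id}$, and the sectional curvatures are $-\psi_{\tau\tau}/\psi$ and $(1-\psi_\tau^2)/\psi^2$. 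Inserting these into \eqref{Scurvature}, the $\lambda$ and $\mathrm{Ric}(\nu,\nu)$ terms cancel, giving $S=(3\psi_\tau-\psi_\tau^3)/\psi^3$.
\item Sanity check: for $b=0$ this reproduces, via \eqref{CGB}, the exact volume $\frac43\pi^2+\frac23\pi^2\epsilon^{-3}-2\pi^2\epsilon^{-1}$ of the truncated totally geodesic $\mathbb{H}^4$.
\end{itemize}
Since $\psi_\tau=(1+\rho'^2)^{-1/2}(\rho/z-\rho')=z^{-1}+bz^2+\mathcal{O}(z^3)$,
\[
\oint_{\partial M_\epsilon}S\,d\widetilde{s}=2\pi^2\left(3\psi_\tau-\psi_\tau^3\right)\Big|_{z=\epsilon}=-\frac{2\pi^2}{\epsilon^3}+\frac{6\pi^2}{\epsilon}-6\pi^2b+\mathcal{O}(\epsilon),
\]
which has nonzero constant term $-6\pi^2b$.

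As an independent cross-check, the first variation of area at the totally geodesic member $b=0$ gives $\frac{d}{db}\mathcal{A}(M_\epsilon)=2\pi^2(1-\epsilon^2)^{3/2}$. So $\frac{d}{db}\mathcal{A}_R=2\pi^2$, which is exactly the contribution $-\frac13(-6\pi^2)$ that the lemma would discard.

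\textbf{What would rescue it, and the paper's proof.} The statement needs an extra hypothesis such as $u_3=0$ (for instance $H=\mathcal{O}(r^3)$, or $M$ minimal). Under that hypothesis your parity strategy is the natural one, though the parity would still have to be proved. The paper takes a different route: it doubles $M$ and compares \eqref{CGB} on the closed double with \eqref{CGB} on $M_\epsilon^+\cup M_\epsilon^-$. That argument, however, relies on the same unproved symmetry under $\epsilon\mapsto-\epsilon$. When $u_3\neq0$ the double is only $C^2$ across $\partial M$ and that symmetry fails, so the paper's argument does not repair the gap either.
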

\textit{Proof.} Let $M_\epsilon=M\cap\{r\geq \epsilon\}$ be the truncated hypersurface of $M$, a hypersurface of the Poincar\'e-Einstein space $\widetilde{N}^5$ that meets the asymptotic boundary at a right angle.

Assume, by contradiction, that the expansion of the integral of the statement in terms of $\epsilon>0$ is
\begin{equation}\label{assumption}
	\oint_{\partial M_\epsilon} S\,d\widetilde{s}=\frac{A_3}{\epsilon^3}+\frac{A_2}{\epsilon^2}+\frac{A_1}{\epsilon}+C+\mathcal{O}(\epsilon)\,,
\end{equation}
where $A_1,A_2,A_3$ are arbitrary real numbers and $C\neq 0$ is a nonzero constant.

Consider the double of $M$ (denoted by $\widetilde{M}$) and denote by $M_\epsilon^+$ to $M_\epsilon$ and by $M_\epsilon^-$ the corresponding part on the other half of $\widetilde{M}$ such that
$$\lim_{\epsilon\to 0^+} M_\epsilon^+\cup M_\epsilon^-=\widetilde{M}\,.$$
Since $M$ meets the asymptotic boundary at a right angle, its double $\widetilde{M}$ is a sufficiently regular \emph{closed} $4$-dimensional manifold and we can employ the Chern-Gauss-Bonnet formula \eqref{CGB} to deduce that
\begin{equation}\label{ap1}
	3\int_{\widetilde{M}}\lambda^2\,d\widetilde{A}+\frac{1}{8}\int_{\widetilde{M}}\lvert W\rvert^2\,d\widetilde{A}-\frac{1}{4}\int_{\widetilde{M}}\lvert E\rvert^2\,d\widetilde{A}=4\,\pi^2\chi(\widetilde{M})\,.
\end{equation}
On the other hand, from \eqref{CGB} applied to $\widetilde{M}_\epsilon=M_\epsilon^+\cup M_\epsilon^-$ we obtain
\begin{eqnarray*}
	\frac{2A_2}{\epsilon^2}+2C+\mathcal{O}(\epsilon)&=&\oint_{\partial M_\epsilon^+}S\,d\widetilde{s}+\oint_{\partial M_\epsilon^-} S\,d\widetilde{s}=\oint_{\partial\widetilde{M}_\epsilon}S\,d\widetilde{s}\\
	&=&4\,\pi^2\chi(\widetilde{M}_\epsilon)-3\int_{\widetilde{M}_\epsilon}\lambda^2\,d\widetilde{A}-\frac{1}{8}\int_{\widetilde{M}_\epsilon}\lvert W\rvert^2\,d\widetilde{A}+\frac{1}{4}\int_{\widetilde{M}_\epsilon}\lvert E\rvert^2\,d\widetilde{A}\,.
\end{eqnarray*}
The first equality follows from \eqref{assumption} and the fact that the integrals over $\partial M_\epsilon^+$ and $\partial M_\epsilon^-$ have the same expressions after the change $\epsilon\mapsto-\epsilon$. Hence, the terms with odd powers of $\epsilon^{-1}$ cancel out, while the even ones add themselves. Finally, taking the limit as $\epsilon\to 0^+$ and comparing the finite part with that of \eqref{ap1}, we deduce that $C$ must be zero, contradicting our assumption. \hfill$\square$

\begin{lemma}\label{laplacian} Let $M$ be an asymptotically minimal (of order $2$) hypersurface in $\widetilde{N}^5$ and denote by $M_\epsilon=M\cap\{r\geq \epsilon\}$, $\epsilon>0$, the truncated hypersurface. Then, when expanding in terms of $\epsilon>0$, the constant term of
	$$\int_{M_\epsilon}2\lvert\mathring{B}\rvert^2\,d\widetilde{A}\,,$$
is the same as that of 
$$\int_{M_\epsilon}\left(2\lvert\mathring{B}\rvert^2+\Delta\lvert\mathring{B}\rvert^2\right)d\widetilde{A}\,.$$
Here, $\lvert\mathring{B}\rvert^2$ denotes the square of the trace-free second fundamental form of $M$ in $\widetilde{N}^5$. Moreover, the second expression above converges when $\epsilon\to 0^+$.
\end{lemma}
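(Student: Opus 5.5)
The plan is to show that $\int_{M_\epsilon}\Delta\lvert\mathring{B}\rvert^2\,d\widetilde{A}$ has no constant term in its $\epsilon$-expansion, and then to check convergence of the combined integrand separately. By the divergence theorem,
$$\int_{M_\epsilon}\Delta\lvert\mathring{B}\rvert^2\,d\widetilde{A}=-\oint_{\partial M_\epsilon}\partial_\nu\lvert\mathring{B}\rvert^2\,d\widetilde{s}\,,$$
with $\nu$ the inward unit normal to $\partial M_\epsilon$. So it suffices to expand this boundary term in $\epsilon$.

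\textbf{Step 1: local description near $\partial M$.} Since $M$ meets the asymptotic boundary orthogonally, I would parametrize a collar of $\partial M$ in $M$ by $(y,r)$, with $y\in\partial M$ and $r$ the special defining function restricted to $M$. In the conformally compactified metric $\overline{g}=r^2\widetilde{g}$, both the induced metric and the rescaled quantities are smooth up to $r=0$.

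\textbf{Step 2: transformation of $\lvert\mathring{B}\rvert^2$.} The trace-free second fundamental form transforms conformally as $\mathring{B}=r^{-1}\mathring{\overline{B}}$ as a $(0,2)$-tensor, so $\lvert\mathring{B}\rvert^2_{\widetilde{g}}=r^2\lvert\mathring{\overline{B}}\rvert^2_{\overline{g}}=r^2 f(y,r)$ with $f$ smooth up to the boundary. The asymptotically minimal hypothesis $H=\mathcal{O}(r^2)$ is the input that should guarantee $f$ has an even expansion in $r$ to the needed order, as in the Graham--Witten/Tyrrell analysis of the asymptotics of $M$. This is the natural step to adapt from the proof of Theorem 3.2 of \cite{T}.

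\textbf{Step 3: expand the boundary term.} We have $\nu=r\,\partial_r$ to leading order (up to smooth corrections), and $d\widetilde{s}=\epsilon^{-3}d\overline{s}$. Hence
$$\oint_{\partial M_\epsilon}\partial_\nu\lvert\mathring{B}\rvert^2\,d\widetilde{s}=\epsilon^{-3}\oint \epsilon\,\partial_r\!\left(r^2f\right)\big|_{r=\epsilon}\,d\overline{s}_\epsilon\,,$$
which is a Laurent series in $\epsilon$ beginning at $\epsilon^{-1}$. The constant term is the one I expect to be the main obstacle. It would come from the $\epsilon^{3}$ coefficient of $\epsilon\,\partial_r(r^2f)\,d\overline{s}_\epsilon$, which requires the odd-order coefficient $r^1$ of $f\,d\overline{s}$ to vanish.

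I would try two arguments for this, in order:
\begin{enumerate}
\item derive the parity from the evenness of the expansions of $\overline{g}$ and $\overline{\xi}_r$, which follows from asymptotic minimality of order $2$;
\item use the doubling trick of Lemma \ref{Claim}: the contributions of $M_\epsilon^{\pm}$ are related by $\epsilon\mapsto-\epsilon$, so odd-in-$\epsilon$ constant contributions cancel, while on the closed double $\widetilde{M}$ the integral of a Laplacian is $0$.
\end{enumerate}
Either argument forces the constant term to vanish.

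\textbf{Step 4: convergence.} For the second claim I would show that $2\lvert\mathring{B}\rvert^2+\Delta\lvert\mathring{B}\rvert^2=\mathcal{O}(r^4)$, which is integrable against $d\widetilde{A}=r^{-4}d\overline{A}$. Writing $u=r^2f$ and computing the hyperbolic Laplacian near the boundary gives
$$\Delta u = r^2\partial_r^2 u-3r\,\partial_r u+\mathcal{O}(r^3)\,.$$
For $u=r^2f_0$ this equals $(2-6)r^2f_0=-4r^2f_0$, which does not cancel against $2r^2f_0$. So either the leading coefficient $f_0$ must vanish (this is where $H=\mathcal{O}(r^2)$ and the Gauss--Codazzi equations should force $\mathring{\overline{B}}|_{\partial M}=0$, i.e. $\partial M$ umbilic-free in the appropriate sense), or my normalization of $\Delta$ differs. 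I would then verify that the surviving terms are $\mathcal{O}(r^4)$, redoing the computation carefully with the sign convention of \cite{T}.
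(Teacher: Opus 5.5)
\textbf{Gap: the vanishing of the $r^3$ coefficient is never established, and Step 4 contains a computational error.}

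Your reduction is sound and is the same as the paper's. By the divergence theorem, the constant term of $\int_{M_\epsilon}\Delta\lvert\mathring{B}\rvert^2\,d\widetilde{A}$ equals $-3\oint_{\partial M}b_3\,ds$, up to a harmless $\mathcal{O}(\epsilon^2)$ correction of the level-set measure. Here $\lvert\mathring{B}\rvert^2=b_2r^2+b_3r^3+\cdots$. So everything rests on $b_3=\partial_r\lvert\mathring{\overline{B}}\rvert^2\rvert_{r=0}=0$, and this is exactly the step you leave open.

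Neither mechanism you offer proves it.
\begin{itemize}
\item \emph{Parity.} There is none to exploit. The condition $H=r\overline{H}+\overline{\xi}_r=\mathcal{O}(r^2)$ only kills the $r$-coefficient of $H$. In the graph coordinates $x_5=u(\mathbf{x},r)$, the quantity $\partial_r\overline{B}_{44}\rvert_{r=0}=\partial^3_{rrr}u$ stays free; fixing it would need $H=\mathcal{O}(r^3)$. Hence $\overline{\xi}_r=-\overline{B}_{44}\rvert_{r=0}\,r+\mathcal{O}(r^2)$ has an arbitrary $r^2$ coefficient and is neither even nor odd.
\item \emph{Doubling.} This never uses $H=\mathcal{O}(r^2)$, yet the conclusion is false without it. For a hypersurface merely meeting $\partial X^5$ orthogonally, $b_3=2\,\partial_r\overline{B}_{44}\,(\overline{B}_{44}-\overline{H})\rvert_{r=0}$, which is generically nonzero. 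Moreover, $r^{-2}\overline{g}$ does not extend across the seam, so there is no closed manifold on which $\int\Delta=0$ applies. That vanishing is precisely the finite-part statement you are trying to prove.
\end{itemize}

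The missing idea is the paper's first-order computation:
\begin{itemize}
\item Orthogonality together with $\partial_rg_r\rvert_{r=0}=0$ gives $\overline{B}_{i4}=0$ and $\partial_r\overline{B}_{ij}=0$ for $i,j\le 3$ at $r=0$.
\item The hypothesis $H=\mathcal{O}(r^2)$ gives $3\overline{B}_{44}={\rm Trace}\,{\rm II}$, i.e.\ $\overline{H}=\overline{B}_{44}$, at $r=0$.
\item Hence $\partial_r(\lvert\overline{B}\rvert^2-4\overline{H}^2)\rvert_{r=0}=2\overline{B}_{44}\partial_r\overline{B}_{44}-2\overline{H}\partial_r\overline{B}_{44}=0$.
\end{itemize}

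\textbf{Step 4.} On the $4$-dimensional $M$ with $\widetilde{g}=r^{-2}\overline{g}$, one has $\Delta=r^2\Delta_{\overline{g}}-2r\,\overline{g}(\nabla r,\nabla\,\cdot\,)$. So the model operator is $r^2\partial_r^2-2r\partial_r$; your coefficient $-3$ belongs to a $5$-dimensional space. With the correct coefficient, $(\Delta+2)r^s=(s-1)(s-2)r^s$, so the $b_2r^2$ term is annihilated automatically and nothing forces $b_2=0$.

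In fact $b_2=0$ is false in general. By Remark \ref{final}, $b_2=\lvert\mathring{{\rm II}}\rvert^2$, which vanishes only when $\partial M$ is umbilic in $\partial X^5$. So no Gauss--Codazzi argument can give $\mathring{\overline{B}}\rvert_{\partial M}=0$.

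Using $\lvert\nabla r\rvert^2_{\overline{g}}=1-\overline{\xi}_r^2$ and that $\partial M$ is totally geodesic in $(M,\overline{g})$, one gets $2\lvert\mathring{B}\rvert^2+\Delta\lvert\mathring{B}\rvert^2=2b_3r^3+\mathcal{O}(r^4)$. So your pointwise route to convergence is a valid and somewhat cleaner alternative to the paper's coarea/cancellation argument. But it too hinges on $b_3=0$; otherwise a $\log\epsilon$ divergence appears.
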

\textit{Proof.} Let $M$ be a hypersurface of the Poincar\'e-Einstein space $\widetilde{N}^5=\mathring{X}^5$ with metric $\widetilde{g}$ and consider a special defining function $r$ of the asymptotic boundary $\partial X^5$. Then, the compactified metric $\overline{g}=r^2\widetilde{g}$ of $X^5$ can be written in its normal form as
$$\overline{g}=g_r+dr^2\,,$$
where $\partial_r g_r\lvert_{r=0}=0$ (see, for instance, \cite{G,T} and references therein). Throughout this proof we will denote with upper bars the objects relative to the compactified metric $\overline{g}$. 

The proof of the lemma will be divided into two parts: first, we will show that the square of the trace-free second fundamental form $\lvert\mathring{B}\rvert^2$ of $M$ in $\widetilde{N}^5$, which is $\mathcal{O}(r^2)$ (see \eqref{equality} and \eqref{ChenB}), has no $r^3$ term when considering its expansion in terms of $r$. Then, we will employ the divergence theorem to show the claims of the statement.

Since $M$ is asymptotically minimal, $H=\mathcal{O}(r^2)$ and so, it approaches the asymptotic boundary at a right angle. Thus, near $\partial X^5$ we can see $M$ as a graph over a cylinder. Let $\mathbf{x}=(x_1,x_2,x_3)$ be local coordinates of $\partial M$ in $\partial X^5$ and consider $x_5$ to be the arc length parameter of the geodesics orthogonal to $M$ in $\partial X^5$. By locally extending these coordinates to a neighborhood of $\partial X^5$ and defining $x_4=r$, we get the local coordinates $(x_1,x_2,x_3,x_4,x_5)$ in $X^5$. Then, $M$ can be parameterized as the graph 
$$x_5=u(\mathbf{x},r)\,,$$
and the tangent space to $M$ is spanned by the vector fields
$$X_i=\partial_{x_i}+\frac{\partial u}{\partial x_i}\,\partial_{x_5}\,,$$
for every $i=1,2,3,4$.

From our choice of local coordinates, it is clear that for every $i=1,2,3,4$,
$$\bar{g}_{i5}=\bar{g}(\partial_{x_i},\partial_{x_5})=0\,,\quad\quad\quad \bar{g}_{55}=\bar{g}(\partial_{x_5},\partial_{x_5})=1\,,$$
at $\partial M$ (i.e., $r=0$ and $x_5=0$). Hence, applying this to $X_i$, we deduce that
$$u(\mathbf{x},0)=0\,,\quad\quad\quad \frac{\partial u}{\partial x_i}(\mathbf{x},0)=0\,,$$
for every $i=1,2,3,4$.

Let $\overline{\xi}$ be the unit normal to $M$ with respect to $\overline{g}$. The second fundamental form $\overline{B}$ is defined by
$$\overline{B}_{ij}=\overline{B}(X_i,X_j)=\overline{g}\left(\overline{\nabla}_{X_i}X_j,\overline{\xi}\right),$$
for every $i,j=1,2,3,4$. Here, $\overline{\nabla}$ represents the Levi-Civita connection on $X^5$. Computing the Christoffel symbols for $\overline{g}$ and using that $\partial_rg_r\lvert_{r=0}=0$, we deduce that $\overline{B}_{i4}=0$ at $r=0$ for every $i=1,2,3$. Further, differentiating $\overline{B}_{ij}$ with respect to $r$, we get $\partial_r\overline{B}_{ij}=0$ at $r=0$ for every $i,j=1,2,3$. These identities at $r=0$ follow \emph{only} from $M$ approaching the asymptotic boundary at a right angle. However, the quantities $\overline{B}_{44}=\partial_{rr}^2u$ and $\partial_r\overline{B}_{44}=\partial_{rrr}^3u$ at $r=0$ may not vanish.

We next employ the hypothesis that $H=\mathcal{O}(r^2)$ to show that $\partial_r (\lvert\overline{B}\rvert^2-4\overline{H}^2 )=0$ at $r=0$. To see this, we first consider the expansions in terms of $r$ of $\overline{H}$ and $\overline{\xi}_r=\overline{g}(\overline{\xi},\partial_r)$. The mean curvature $\overline{H}$ of $M$ with respect to the compactified metric $\overline{g}$ is defined as
\begin{equation}\label{E1}
	\overline{H}=\frac{1}{4}{\rm Trace}\, \overline{B}=\frac{1}{4}\,\overline{g}^{ij}\,\overline{B}_{ij}\,,
\end{equation}
where the Einstein summation convention is being used for the indexes $i,j=1,2,3,4$. Therefore, evaluating at $r=0$, we obtain
\begin{equation}\label{expansionH}
	\overline{H}=\frac{1}{4}\sum_{i=1}^4\overline{B}_{ii}\rvert_{r=0}+\mathcal{O}(r)\,.
\end{equation} 
Similarly, from the parameterization of $M$ as a graph given above we obtain
\begin{equation}\label{expansionXi}
	\overline{\xi}_r=\overline{g}\left(\overline{\xi},\partial_r\right)=-\overline{B}_{44}\rvert_{r=0} r+\mathcal{O}(r^2)\,.
\end{equation}
Combining \eqref{expansionH} and \eqref{expansionXi} with the analogue formula of \eqref{HRhyp-euc}, we deduce that
\begin{equation}\label{expansionHnew}
H=r\overline{H}+\overline{\xi}_r=\frac{1}{4}\left(\sum_{i=1}^3 \overline{B}_{ii}\rvert_{r=0}-3\overline{B}_{44}\rvert_{r=0}\right)r+\mathcal{O}(r^2)\,.
\end{equation}
Since $H=\mathcal{O}(r^2)$ the coefficient of $r$ must vanish. Therefore,
\begin{equation}\label{traceII}
	3\overline{B}_{44}\rvert_{r=0}=\sum_{i=1}^3\overline{B}_{ii}\rvert_{r=0}={\rm Trace}\,{\rm II}\,,
\end{equation}
where ${\rm II}$ represents the second fundamental form of $\partial M$ in $\partial X^5$ (with respect to the compactified metric $\overline{g}$). In particular, this shows that $\overline{H}=\overline{B}_{44}$ at $r=0$. By definition, $\lvert\overline{B}\rvert^2$ is given by
\begin{equation}\label{E2}
	\lvert\overline{B}\rvert^2=\overline{g}^{ij}\overline{g}^{kl}\overline{B}_{ik}\overline{B}_{jl}\,,
\end{equation}
where we are using once again the Einstein summation convention for $i,j,k,l=1,2,3,4$. Since $\partial_r\overline{g}^{ij}=0$ holds at $r=0$, it follows differentiating \eqref{E1} and \eqref{E2} and evaluating at $r=0$, that
\begin{eqnarray*}
	\partial_r\left(\lvert\overline{B}\rvert^2-4\overline{H}^2\right)\rvert_{r=0}&=&\partial_r\lvert\overline{B}\rvert^2\rvert_{r=0}-8\overline{H}\rvert_{r=0}\partial_r\overline{H}\rvert_{r=0}\\
	&=&2\sum_{i=1}^4\sum_{j=1}^4\overline{B}_{ij}\rvert_{r=0}\partial_r\overline{B}_{ij}\rvert_{r=0}-2\overline{B}_{44}\rvert_{r=0}\sum_{i=1}^4\partial_r\overline{B}_{ii}\rvert_{r=0}\\
	&=&2\overline{B}_{44}\rvert_{r=0}\partial_r\overline{B}_{44}\rvert_{r=0}-2\overline{B}_{44}\rvert_{r=0}\partial_r\overline{B}_{44}\rvert_{r=0}\\
	&=&0\,.
\end{eqnarray*}
In the second equality we have used that, as a consequence of $H=\mathcal{O}(r^2)$, \eqref{traceII} must hold and so $\overline{H}=\overline{B}_{44}$ at $r=0$. The third equality follows since $\partial_r\overline{B}_{ij}\rvert_{r=0}=0$ for every $i,j=1,2,3$, and $\overline{B}_{i4}\rvert_{r=0}=0$ for every $i=1,2,3$, as explained above.

Thus, from the standard relation between the square of the trace-free second fundamental form, and $\lvert\overline{B}\rvert^2$ and $\overline{H}^2$, we get
$$\lvert\mathring{B}\rvert^2=r^2\lvert\mathring{\overline{B}}\rvert^2=r^2\left(\lvert\overline{B}\rvert^2-4\overline{H}^2\right),$$
where the first equality follows from Theorem 1 of \cite{C} (see also \eqref{equality} and \eqref{ChenB}). Consequently, $\lvert\mathring{B}\rvert^2$ has no $r^3$ term when expanding it in terms of $r$. That is,
\begin{equation}\label{expansionB}
	\lvert\mathring{B}\rvert^2=b_2r^2+b_4r^4+\mathcal{O}(r^5)\,.
\end{equation}
Observe that from the observation of Remark \ref{final} and $\lvert B\rvert^2=\lvert\mathring{B}\rvert^2+4H^2$, the above expression \eqref{expansionB} coincides with that of (5.13) of \cite{AT}.

We are now in a position to show the statement of the lemma. The remaining part of this proof will follow the beautiful idea of Theorem 3.2 of \cite{T}. Consider the truncated hypersurfaces $M_\epsilon=M\cap \{r\geq \epsilon\}$, for $\epsilon>0$ sufficiently small. From the divergence theorem, we have
\begin{equation}\label{int1}
	\int_{M_\epsilon}\left(2\lvert\mathring{B}\rvert^2+\Delta\lvert\mathring{B}\rvert^2\right)d\widetilde{A}=2\int_{M_\epsilon}\lvert\mathring{B}\rvert^2d\widetilde{A}+\oint_{\partial M_\epsilon}\partial_\eta\lvert\mathring{B}\rvert^2d\widetilde{s}\,,
\end{equation}
where $\eta$ is the outward pointing unit conormal to $\partial M_\epsilon$ with respect to the metric $\widetilde{g}$. Hence, $\eta=-r\partial_r$. From \eqref{expansionB}, we then compute
$$\partial_\eta\lvert\mathring{B}\rvert^2=-r\partial_r\lvert\mathring{B}\rvert^2=-2b_2r^2-4b_4r^4+\mathcal{O}(r^5)\,,$$
and so
\begin{equation}\label{int2}
	\oint_{\partial M_\epsilon}\partial_\eta\lvert\mathring{B}\rvert^2d\widetilde{s}=\oint_{\partial M_\epsilon}\left(-2b_2r^{-1}-4b_4r+\mathcal{O}(r)\right)ds=-\frac{2}{\epsilon}\oint_{\partial M_\epsilon}b_2\,ds+\mathcal{O}(\epsilon)\,,
\end{equation}
since $ds=r^3d\widetilde{s}$ and $r=\epsilon$ holds along $\partial M_\epsilon$. This proves the first claim, namely, that adding the Laplacian term does not change the finite part in the expansion as $\epsilon\to 0^+$.

It now remains to show that the singular part of the integrals on the right-hand side of \eqref{int1} cancels out. Fix $r_*>0$ sufficiently small and independent of $\epsilon$. We split $M_\epsilon$ into the regions $M_\epsilon^*=M_\epsilon\cap\{r\leq r_*\}$ and $M_\epsilon\setminus M_\epsilon^*$. Then,
\begin{equation}\label{collar}
	\int_{M_\epsilon}\lvert\mathring{B}\rvert^2d\widetilde{A}=\int_{M_\epsilon^*}\lvert\mathring{B}\rvert^2d\widetilde{A}+\int_{M_\epsilon\setminus M_\epsilon^*}\lvert\mathring{B}\rvert^2d\widetilde{A}=\int_{M_\epsilon^*}\lvert\mathring{B}\rvert^2d\widetilde{A}+\widetilde{C}\,,
\end{equation}
where $\widetilde{C}$ is a constant depending only on $r_*$, and hence independent of $\epsilon$. Now, for the potentially divergent part, we apply the coarea formula to obtain
\begin{eqnarray}\label{int3}
	\int_{M_\epsilon^*}\lvert\mathring{B}\rvert^2d\widetilde{A}&=&\int_{M_\epsilon^*}\lvert\mathring{B}\rvert^2r^{-4}dA=\int_\epsilon^{r_*}\oint_{M_r}\frac{\lvert\mathring{B}\rvert^2r^{-4}}{\lvert \nabla r\rvert}dsdr=\int_\epsilon^{r_*}\oint_{M_r}\left(b_2r^{-2}+\widetilde{b}_4+\mathcal{O}(r)\right)dsdr\nonumber\\&=&\frac{1}{\epsilon}\oint_{\partial M_\epsilon} b_2\,ds+C+\mathcal{O}(\epsilon)\,,
\end{eqnarray}
where the constant $C$ depends on $r_*$, but not on $\epsilon$. Here, $\widetilde{b}_4$ depends on $b_4$ and the $r^2$ coefficient of $$\lvert\nabla r\rvert=\sqrt{1-\overline{\xi}_r^2}=1-\frac{1}{2}\overline{B}_{44}^2\rvert_{r=0}r^2+\mathcal{O}(r^3)\,.$$ 
Thus, combining \eqref{int1}, \eqref{int2}, \eqref{collar} and \eqref{int3}, we conclude with
$$\int_{M_\epsilon}\left(2\lvert\mathring{B}\rvert^2+\Delta\lvert\mathring{B}\rvert^2\right)d\widetilde{A}=2\int_{M_\epsilon^*}\lvert\mathring{B}\rvert^2d\widetilde{A}+2\widetilde{C}+\oint_{\partial M_\epsilon} \partial_\eta\lvert\mathring{B}\rvert^2d\widetilde{s}=2C+2\widetilde{C}+\mathcal{O}(\epsilon)\,,$$
which converges as $\epsilon\to 0^+$, proving the second claim. \hfill$\square$

\begin{rem}\label{final} It is worth noticing that the coefficient $b_2$ in \eqref{expansionB} is, precisely, the square of the trace-free second fundamental $\lvert\mathring{{\rm II}}\rvert^2$ of $\partial M$ in $\partial X^5$ with respect to the compactified metric $\overline{g}$. Indeed,
\begin{eqnarray*}
	b_2&=&\lvert\overline{B}\rvert^2\rvert_{r=0}-4\overline{H}^2\rvert_{r=0}=\lvert {\rm II}\rvert^2+\overline{B}_{44}^2\rvert_{r=0}-4\overline{B}_{44}^2\rvert_{r=0}=\lvert {\rm II} \rvert^2-3\overline{B}_{44}^2\rvert_{r=0}\nonumber\\
	&=&\lvert {\rm II} \rvert^2-\frac{1}{3}\left({\rm Trace} \,{\rm II}\right)^2=\lvert\mathring{{\rm II}}\rvert^2\,,
\end{eqnarray*}
where we have used that, since $H=\mathcal{O}(r^2)$ equation \eqref{traceII} holds.

In particular, we conclude that an asymptotically minimal hypersurface $M$ in $\widetilde{N}^5$ satisfies
$$\int_M\lvert\mathring{B}\rvert^2d\widetilde{A}<\infty\,,$$
if and only if $\mathring{{\rm II}}=0$ identically, which is equivalent to $\partial M$ being totally umbilical in $\partial X^5$.
\end{rem}

\section*{Acknowledgments} 

\small{The author would like to thank Aaron J. Tyrrell for fruitful discussions and for sharing ideas that have contributed to the improvement of the paper.}

\bigskip

\begin{flushleft}
	\'Alvaro P{\footnotesize \'AMPANO}\\
	Department of Mathematics and Statistics, Texas Tech University, Lubbock, TX, 79409, USA\\
	E-mail: alvaro.pampano@ttu.edu
\end{flushleft}

\end{document}